\documentclass[poms,final,nonblindrev]{poms1_V1} 

\OneAndAHalfSpacedXI 

\newcommand{\sk}[1]{{\textcolor{black}{#1}}}
\newcommand{\hll}[1]{{\textcolor{black}{#1}}}

\usepackage{graphicx}
\usepackage{subfigure, epsfig}
\usepackage{natbib}
\usepackage{algorithm}
\usepackage{algpseudocode}
\usepackage{tikz}
\usepackage{enumitem}
\usepackage{amsmath,amssymb,amsfonts}
\usepackage{rotating}
\usepackage{fancyvrb}
\usepackage{array}    
\usepackage{lscape}
\usepackage{multirow} 
\usepackage{xcolor}
\usepackage{soul}

\sethlcolor{yellow} 
 \bibpunct[, ]{(}{)}{,}{a}{}{,}%
 \def\bibfont{\small}%
\TheoremsNumberedThrough     
\ECRepeatTheorems

\EquationsNumberedThrough    

\usepackage{xcolor}
\begin{document}


\RUNAUTHOR{Title}

\RUNTITLE{Stochastic appointment scheduling with patient-and-time-dependent probability distributions}

\TITLE{Stochastic Appointment Scheduling with Patient-and-Time-Dependent Probability Distributions}


\ARTICLEAUTHORS{%
\AUTHOR{Soheyl Khalilpourazari}
\AFF{Department of Mechanical, Industrial \& Aerospace Engineering, Concordia University, Montreal, Canada\\
Interuniversity Research Centre on Enterprise Networks, Logistics and Transportation (CIRRELT), Montreal, Canada \EMAIL{soheyl.khalilpourazari@mail.concordia.ca}} 
\AUTHOR{Hossein Hashemi Doulabi}
\AFF{Department of Mechanical, Industrial \& Aerospace Engineering, Concordia University, Montreal, Canada\\
Interuniversity Research Centre on Enterprise Networks, Logistics and Transportation (CIRRELT), Montreal, Canada \EMAIL{hossein.hashemi@concordia.ca }}
} 

\ABSTRACT{\hll{Outpatient appointment scheduling, in settings ranging from primary care practices to specialty consultation clinics, requires balancing provider idle time, overtime, and patient waiting time. Uncertainty in service times, patient punctuality, and attendance makes scheduling a challenging task, resulting in high clinic costs and low patient satisfaction. Addressing these uncertain factors is particularly difficult because they follow patient-and-time-dependent probability distributions. Existing models fail to simultaneously incorporate such distributions and often assume identical distributions for all patients in all time slots, mainly due to the increased modeling and solution complexity. In this research, we propose a novel stochastic programming model that captures an exponential number of scenarios using a polynomial number of variables and constraints without relying on sampling methods. We incorporate patient-dependent probability distributions for service times, and patient-and-time-dependent distributions for no-shows and arrival times, and we discuss how these distributions can be estimated from routinely collected electronic health record data. To enhance the model, we explore the impact of personalized reminders on no-show rates and scheduling effectiveness. Besides, we investigate how changes in attendance rates, influenced by incentives or negative clinic experiences, impact the appointment scheduling efficiency. The results demonstrate that, using our proposed model, we optimally solve several instances with up to 14 patients in a reasonable computational time. The generated schedules substantially outperform classical Bailey-type appointment rules, whose best-performing variant increases the total expected cost by 97\% on average relative to our schedules. Moreover, the generated schedules reduce total costs by 34\% on average by incorporating patient-dependent service times, 12\% by considering patient-and-time-dependent unpunctuality, and 67\% by integrating patient-and-time-dependent no-shows. Furthermore, we show that personalized reminders have the potential to reduce total costs by 23\%, and sensitivity analyses confirm that these benefits are robust to moderate estimation errors in the input distributions. Our approach has a significant potential to improve the efficiency of healthcare services by reducing patient waiting time, provider idle time, and overtime costs, and it highlights the value of personalized communication strategies in improving patient attendance.}}%

\KEYWORDS{Appointment Scheduling, Stochastic Service Times, No-shows, Unpunctuality, Patient-and-Time Dependent Probability Distributions}

\maketitle

%


\section{Introduction}
\noindent Ensuring inclusive, affordable, and timely access to high-quality healthcare has become a critical concern in today's society. To achieve this, continuous improvement in healthcare is essential, particularly in enhancing the patient experience. This includes not only the direct medical care received but also the overall journey patients navigate through the healthcare system \citep{Dai2020, Samorani2022Overbooked}. In this context, Appointment Scheduling (AS) is an essential tool for reducing patient wait times, provider idle times, and clinic overtime. However, the effectiveness of appointment scheduling is frequently undermined by uncertainties in service times, patient unpunctuality, and patient no-shows.

\hll{This work focuses on intra-day appointment scheduling for outpatient consultative care, in which a single healthcare provider serves a sequence of booked patients during one clinic session. Representative settings include primary care practices, specialty consultation clinics such as cardiology or endocrinology, and hospital outpatient departments offering follow-up visits. As an illustration, consider a family physician whose morning session includes twelve booked patients: a new patient with several chronic conditions may require a long initial consultation, whereas a returning patient may need only a brief follow-up. Some of these patients arrive early or late depending on rush-hour traffic, and others miss the appointment altogether, so the realized session rarely unfolds as planned. More broadly, the proposed framework applies to any single-server appointment-based service in which customers are served sequentially and service durations, punctuality, and attendance are uncertain. This broad class includes diagnostic imaging, infusion and dialysis sessions, dental and allied-health visits, and even non-medical services such as legal or financial consultations.}

\hll{Appointments of this kind are delivered at a vast scale. In the United States alone, patients made an estimated 1.0 billion office-based physician visits in 2019, equivalent to roughly three visits per person per year \citet{ashman2023characteristics}, across more than 209{,}000 physician offices \citep{USCensusBureau2022CBP}. In England, general practices delivered approximately 383 million appointments in the twelve months ending June 2025 \citep{NHSEngland2025Appointments}. At this scale, attendance failures are costly: a systematic review covering 105 studies reports an average outpatient no-show rate of about 23\% \citep{Dantas2018NoShows}, and \citet{Pesata1999Descriptive} found that 14{,}000 missed appointments in a single midwestern children's hospital led to over one million dollars in losses. These figures indicate that even modest per-session improvements in schedule quality can translate into substantial system-wide savings.}

In this work, we study an appointment scheduling problem under uncertainty in service times, patient arrival times, and no-show behavior. \hll{Unlike previous studies, our model considers patient-dependent stochastic service times, and patient-and-time-dependent unpunctuality and no-shows in a more comprehensive manner. Each of these uncertainty sources has typically been simplified in existing models. Service durations are often treated as identically distributed across patients, or are differentiated only through coarse classifications that cannot capture the diversity of patients' health conditions. Patient unpunctuality, when modeled at all, is usually represented by a single random variable that ignores both individual arrival behavior and the appointment time, even though factors such as rush-hour traffic make lateness strongly time-dependent (Jiang et~al.\ 2019, Zacharias and Yunes 2020). Similarly, no-shows are commonly described by one probability shared by all patients and all time slots, despite empirical evidence that attendance varies systematically across patients and appointment times (\citealp{Luo2012Appointment,Daggy2010,Bennett2009Effect}). These simplifications persist because jointly modeling patient-and-time-dependent uncertainty makes existing formulations computationally intractable, as we review in detail in Section~2. Our research overcomes this limitation by integrating patient-dependent service times and patient-and-time-dependent unpunctuality and no-shows into a single exact model.}

In real-world appointment scheduling problems, service times are patient-dependent, and patient unpunctuality and no-shows are patient-and-time-dependent. Many researchers have studied appointment scheduling with some of these complicating criteria and details separately. However, in none of the studies above, we observe all uncertainty factors with explained details simultaneously \citep{Kong2020,Salzarulo2016Beyond,Li2019I,zacharias2020}. This is because considering these factors makes the modeling complex and cannot be solved optimally by available optimization methodologies. As a remedy to these problems, we propose a novel stochastic mixed-integer programming model that simultaneously considers patient-dependent stochastic service times, and patient-and-time-dependent unpunctuality and no-shows. In addition, the proposed model considers arbitrary probability distributions for uncertain parameters without limiting the probability distributions to be of the same type and captures these distributions as they are without any sampling. The model benefits from a polynomial number of variables and constraints that enables us to consider an exponential number of stochastic scenarios and solve the problem optimally or with a very low optimality gap in a reasonable time. Through these enhancements, our research offers a more flexible, and comprehensive solution to appointment scheduling challenges and sets a new benchmark for personalized patient care and clinic efficiency.

Another important feature studied in this work is that patients can inform the clinic about their availability or unavailability by responding to reminders. This feature enables clinics to explore new strategies for mitigating the adverse effects of no-shows on patient waiting times and provider idle time. We further evaluate the cost-effectiveness and potential benefits of implementing the proposed methodology in clinical settings. We show that the generated schedules reduce total costs by 34\% on average by incorporating patient-dependent service times, and 12\% and 67\% by considering patient-and-time-dependent unpunctuality and no-shows, respectively. Furthermore, we show that considering incentives to encourage patients to respond to reminders and inform the clinic about their no-show has the potential to reduce the total costs further by 23\%. Our research, in addition to the technical contributions, also provides several managerial insights that help healthcare providers improve their services and stay competitive in today's patient-centered healthcare environment. This method will assist clinics in increasing patient satisfaction, which will potentially lead to lower patient turnover and higher patient retention rates which are essential for healthcare providers in the competitive market.

The summary of our contributions are as follows:

\begin{itemize}
    \item We propose a novel stochastic programming model for appointment scheduling under uncertainty, incorporating patient-dependent service times and patient-and-time-dependent unpunctuality and no-shows.
    \item We integrate patient responses to reminders to model their availability and mitigate the impact of no-shows.
    \item We provide extensive computational results and managerial insights to enhance patient satisfaction and clinic performance.
\end{itemize}

The paper is structured as follows: In Section \ref{AS:LR} we review the literature and identify research gaps. In Section \ref{AS:PD}  we define the problem and introduce a novel stochastic programming model for appointment scheduling considering uncertain service times, no-shows, and unpunctuality. In Section \ref{AS:CR}, we present computational results and extensive experiments to evaluate the efficiency of our model. This section also studies the impacts of various factors on clinic efficiency. In Section \ref{AS:CC}  we conclude the paper and provide future research avenues.

\section{Literature Review}\label{AS:LR}
\noindent Our work builds on four key research streams to address challenges related to no-shows, unpunctuality, and variable service times. These streams include: (i) stochastic programming, (ii) dynamic programming, (iii) queuing theory, and (iv) robust optimization. The literature on appointment scheduling in operations management include a wide range of methodologies and approaches with various uncertain parameters. For a detailed literature review, as well as insights into future research avenues, we refer the reader to \citet{Jiang2019Omega} and \citet{Shehadeh2021Stochastic}. In this review, we mention studies that are particularly relevant to our research focus or represent the latest developments in the field.

In the context of stochastic programming, most researchers have proposed two-stage stochastic programming models, mainly within the Sample Average Approximation framework. These studies typically address uncertain parameters including no-shows, random service times, and unpunctuality by incorporating known probability distributions. \hll{For instance, \citet{Jiang2019Omega} study outpatient clinic visits and consider uncertainty in service times and unpunctuality through probability distributions that are identical for all patients. \citet{Zhan2021Home} address in-home service visits, in which providers travel to customers, and handle stochastic service times by generating 1,000 scenarios.} \hll{\citet{Erdogan2013Dynamic} study a generic single-server outpatient setting with no-shows. They propose a model that incorporates no-shows and focuses on assigning appointments to patients dynamically over the planning horizon.} They generate 10,000 random scenarios to show model's efficiency. \hll{\citet{Shehadeh2021Stochastic} have extended this stream to multi-stage stochastic programming for single-provider outpatient procedure visits, considering patient-dependent arrival times and service times. The authors use a Monte Carlo approach to generate 200 scenarios and evaluate the performance of the model.} An exception in this category is the work of \citet{LaGanga2012Overbooking}\hll{, who propose a heuristic for general outpatient clinic sessions that obtains near-optimal overbooked schedules to minimize the effects of no-shows.}

In another area of research, several researchers have proposed dynamic programming models to tackle appointment scheduling problems. These studies mainly employ discrete-time Markov chain processes and often rely on approximation algorithms or heuristics for solutions. \hll{For instance, \citet{Liu2010Dynamic} utilized a Markov Decision Process (MDP) to model the advance booking of outpatient clinic visits with no-shows and cancellations, while \citet{Feldman2014Appointment} developed both static and dynamic models for outpatient visits booked over a multi-day horizon that account for patient preferences and no-shows.} In the static model, different subsets of appointment days are offered to the patient while ignoring the current state of booked appointments. On the other hand, the dynamic model uses the current state of booked appointments to offer the patient a set of appointment days. To solve the problem, they use an approximation approach due to the complexity of the dynamic model. \hll{\citet{Soltani2019Appointment} address stochastic service times and patient no-shows in multi-provider outpatient clinics. The objective function focuses on minimizing patient waiting time, and provider idle time and overtime. They use a machine learning approach to capture patterns and guide a heuristic algorithm toward near-optimal solutions. \citet{Diamant2018} study multi-appointment specialty care programs, motivated by a bariatric surgery assessment clinic, in which patients may miss scheduled visits, and use an approximate dynamic programming algorithm to solve the model.}

Some research in the literature has used queuing theory to model appointment scheduling. \hll{\citet{Liu2016Optimal} proposes an M/M/1/K queuing model for choosing the appointment scheduling window of outpatient visits, with no-show probabilities that differ across categories of patients. \citet{Luo2019Optimization} propose an M/M/1/N queuing model for a hospital outpatient clinic to determine the optimal appointment scheduling window that minimizes no-shows. \citet{zacharias2020} study a generic single-provider outpatient clinic session with time-dependent no-shows, unpunctuality, and stochastic service times.} The authors propose a single-server queuing model to determine the clinic’s workload over time and prove that the objective function is supermodular.

Some works in the literature have developed distributionally robust optimization models to address the stochastic nature of appointment scheduling, mainly focusing on enhancing robustness against worst-case scenarios. These models assume that while some probability distributions characterize the behavior of uncertain parameters, these distributions are unknown and belong to an uncertainty set. \hll{For instance, \citet{Mandelbaum2020Data} study chemotherapy infusion appointments in a cancer center and address uncertain service durations and punctuality using a bootstrapping procedure based on observed data. In another research, \citet{Kong2020} consider outpatient clinic visits with random service times and time-dependent no-shows and aim to minimize worst-case total expected costs. \citet{Jiang2017Integer} study generic single-server appointment systems, such as outpatient clinics and surgical suites, and mainly focus on no-shows considering worst-case expectation and conditional value-at-risk costs.} They use a completely positive decomposition algorithm to solve the robust model by incorporating approximation heuristics during the solution process.

\hll{Closest to our work are studies that pursue an individualized approach toward appointment scheduling. Regarding service times, most models treat durations as identically distributed across patients \citep{Hassin2008, Kaandorp2007Optimal,Liu2016Optimal}, including in primary care office visits where consultation lengths are known to vary widely \citep{TaiSeale2007Time}. Coarse classifications, such as inpatient versus outpatient demand in hospital diagnostic services \citep{Sickinger2009Performance} or new versus returning patients in primary care clinics \citep{Cayirli2008}, capture only part of this heterogeneity because they ignore clinically relevant factors such as chronic conditions and visit reasons. Using data on office-based physician visits, \cite{Salzarulo2016Beyond} showed that schedules built on patient-dependent service times are more accurate. However, they assume identical attendance behavior for all patients and time slots, and their solution methodology is an approximation rather than an exact approach. \cite{Li2019I} estimate individual no-show probabilities with a Bayesian nested logit model for outpatient visits in high-throughput clinics and evaluate the resulting overbooking policies by simulation, without modeling heterogeneous service times or unpunctuality. Similarly, \cite{Kong2020}, discussed above, consider random service times and time-dependent no-shows for outpatient clinic visits, yet their no-show probabilities do not vary across individual patients. In summary, none of these studies jointly capture patient-dependent service times together with patient-and-time-dependent unpunctuality and no-shows within an exact optimization framework.}

The main reason that makes current models in appointment scheduling less efficient is that they often fail to consider patient-and-time-specific factors due to modeling complexity \citep{Li2019I, Kong2020}. To address this gap, our research introduces an innovative stochastic programming approach to the problem. This novel approach enables us to solve the problem with an exponential number of scenarios for the first time and simultaneously consider patient-dependent stochastic service times, and patient-and-time-dependent unpunctuality and no-shows.

\section{Problem Definition and Formulation}\label{AS:PD}

\noindent We consider an appointment scheduling problem where we have to assign appointment times to a certain number of patients within the working hours of the clinic. \hll{Consistent with the outpatient consultation settings described in Section~1, we consider a single provider, such as a primary care physician or a specialist, who serves a sequence of booked patients during one clinic session.} Some of the main challenges in this problem are as follows:

\begin{enumerate}
\item  Service durations are both stochastic and patient-dependent. This variability in service durations reflects the diversity in patients' health conditions. In our study, we explore a broad setting where service durations are allowed to follow diverse, non-identical independent probability distributions without being confined to well-known distributions.

\item  Patient arrival times are stochastic and patient-and-time-dependent that reflect the uncertainty associated with patient unpunctuality. Patient-dependency means that, for each patient, we have a different set of probability distributions to represent his/her unpunctuality. Also, time-dependency refers to the case that the amount of lateness/earliness depends on the allocated appointment time. We consider this assumption to incorporate the effect of traffic in rush hours on patient unpunctuality. This setting allows for greater modeling flexibility by considering patient-and-time-dependency simultaneously.

\item  In some cases, patients may not show up for their appointment. As suggested by \citet{Kong2020}, we assume that the probability of no-shows is time-dependent. Our model advances this assumption by incorporating both time-dependent and patient-dependent probabilities for no-shows, thereby enriching the existing framework with a more detailed understanding of patient behavior.
\end{enumerate}

The primary goal of this research is to minimize the weighted sum of healthcare provider idle times, patient waiting times, and the penalties corresponding to health provider overtime. In the following, initially, we present a simplified model that mainly focuses on the stochastic patient-dependent service durations. This simplification allows us to establish a fundamental understanding of the scheduling dynamics and the presented model without the added complexities of patient unpunctuality and no-shows. Building upon the simplified model, we progressively integrate stochastic patient unpunctuality and no-shows in Section \ref{AS:Sec32}. In Section \ref{AS:Sec33}, we further refine our model to incorporate the impact of appointment reminders and patient notifications on no-show probabilities. This enhancement captures the interactive dynamics between clinic communication strategies and patient behavior. It also offers insights into how proactive patient notifications in response to reminders can influence scheduling efficiency and resource allocation.

\subsection{The Basic Stochastic Appointment Scheduling Model}

\noindent We formulate the problem by a novel stochastic programming approach, called state variable model, that results in a closed-form mixed integer programming model \citep{doulabi2022state}. The main advantage of this formulation is its ability to handle an exponential number of scenarios efficiently using a polynomial number of variables and constraints. This feature is particularly beneficial as it facilitates the solution of large-scale instances with an exponential number of scenarios within a computationally feasible time. To present the proposed model, we define the sets, variables, and parameters as follows:

\noindent
Sets:
\vspace{10 pt}

\begin{tabular}{p{4pt} p{3pt} p{15.1cm}}
\noindent $I$ & : & The ordered set of patients, i.e., $I=\{1,2,\dots ,|I|\}$. Also, we suppose $I_0=I\cup \{0\}$ where "0" denotes a dummy patient with a service time equal to 0 that is scheduled before the first patient at the beginning of the clinic time.  \\[30pt]  
$T$ & : & The set of possible appointment times that are multiples of a small time unit (e.g., 2 minutes) denoted by $\theta$, resulting in $T=\{0,\theta ,2\theta ,3\theta, \dots ,L+O\}$, where $L$ and $O$ are defined as parameters. We suppose that all stochastic service and arrival times are multiples of \textit{$\theta$}. \\[30pt] 
$T_i$ & : & The set of possible appointment times for patient$\ i$. We have
\[
T_i = \begin{cases}
    \{0\}, & i=1, \\
    \left\{t \in T \mid t \ge \sum_{i' \in I : i' < i} t^{min}_{i'}\right\}, & i \in I \setminus \{1\}, \\
    \{.\}, & i = \left|I\right|+1.
\end{cases}
\]
$T_1=\{0\}$ shows that we always offer the first available time to patient 1. In computation of $T_i$ for $i\in I$, we consider the minimum service time of previous patients denoted by $t^{min}_{i'}$. Also, $i=\left|I\right|+1$ denotes a dummy patient that is always scheduled after the last patient at a dummy time $\mathrm{\{}$.$\mathrm{\}}$. \\[30pt] 
$F_i$ & : & The set of possible finish times for patient$\ i$. We have 
\[
F_i = \begin{cases}
    \left\{t \in T \mid t \ge \sum_{i' \in I : i' \le i} t^{min}_{i'}\right\}, & i \neq 0, \\
    \{0\}, & i = 0.
\end{cases}
\]

\end{tabular}

\vspace{10 pt}

\noindent Parameters:

\vspace{10 pt}

\begin{tabular}{p{22pt}p{3pt}p{14.5cm}}
\noindent $L$& : &  The available clinic time.  \\ [10 pt]
$O$& : &  The maximum available overtime of the clinic.  \\  [10 pt]
$t^{min}_i$& : &  The minimum possible service time of patient $i$.  \\  [10 pt]
$c^p_{if_it_{i+1}}$& : &  The waiting cost of patient $i+1$ provided that he/she is given an appointment time of $t_{i+1}$ and the service to patient$\ i$ has finished at the time $f_i$. We have $c^p_{if_it_{i+1}}={\alpha }^p{\left[f_i-t_{i+1}\right]}^+$ where ${\alpha }^p$ is the penalty for a patient waiting for a time unit and ${\left[x\right]}^+$ is equal to $x$ if it is positive and 0 otherwise.\\  [10 pt]
$c^d_{if_it_{i+1}}$& : &  The health provider idle cost while waiting for patient $i+1$ provided that the patient is given an appointment time of $t_{i+1}$ and the service to the previous patient$\ i$ has finished at the time $f_i$. We have $c^d_{if_it_{i+1}}={\alpha }^d{\left[t_{i+1}-f_i\right]}^+$ where ${\alpha }^d$ is the penalty for the health provider waiting for a time unit.  
\end{tabular}

\begin{tabular}{p{22pt}p{3pt}p{14.5cm}}
\noindent $c^o_{if_it_{i+1}}$ & : &  The expected over time cost corresponding to the service to patient $i+1$ provided that the patient is given an appointment time of $t_{i+1}$ and the service to the previous patient$\ i$ has finished at the time $f_i$. We have \newline $c^o_{if_it_{i+1}}={\alpha }^o\left(\ E[{\mathrm{max} \left(L,{\mathrm{max} \left(f_i,t_{i+1}\right)\ }+{\tilde{d}}_{i+1}\right)\ }-\mathrm{max}\mathrm{}(L,f_i)]\right)$ where ${\tilde{d}}_{i+1}$ is a random variable representing the service duration of the patient $i+1$. \\  [10 pt]

\end{tabular}

\vspace{10 pt}

\noindent
Decision Variables:\vspace{6pt}

\begin{tabular}{p{22pt}p{3pt}p{14.5cm}}
\noindent $w_{it_i}$ & : & 1 if we assign time slot $t_i$ to patient$\ i$; 0 otherwise. \\  [10 pt]
$z_{if_it_{i+1}}$ & : & The probability that the service to patient$\ i$ finishes at time $f_i$ (for  $f_i<O+L$) and we have $w_{\left(i+1\right)t_{i+1}}=1$, i.e., the next patient $i+1$ is scheduled at time $t_{i+1}$. If $w_{\left(i+1\right)t_{i+1}}=1$ holds, $z_{if_it_{i+1}}$ will be equal to the above probability. Otherwise, it will be equal to 0. In the case of $\ \ i=\left|I\right|+1$, $t_{i+1}$ can only take the dummy value ``.'' and we ignore the last condition $w_{\left(i+1\right)t_{i+1}}=1$.   \\  [10 pt]
\end{tabular}
\vspace{0 pt}

\noindent
Random Variable:

\begin{tabular}{p{22pt}p{3pt}p{14.5cm}}
\noindent ${\widetilde{d}}_i$ & : & Random service time of patient $i$. \\  [10 pt]
\end{tabular}

\vspace{15 pt}

Using the given notation, we formulate the simplified appointment scheduling problem as (M1).

\begin{eqnarray}
& \hspace{-290pt} \textrm{(M1)} \ & \hspace{-260pt} \min \sum_{i\in I_0}\sum_{f_i\in F_i}\sum_{t_{i+1}\in T_{i+1}}\bigg(c_{if_it_{i+1}}^p+c_{if_it_{i+1}}^d  + c_{if_it_{i+1}}^o\bigg)z_{if_it_{i+1}} \label{eq.1}\\[0pt]
& \hspace{-150pt}  \small \text{Subject to:} & \hspace{-200 pt}  \nonumber \\[-5pt]
& \hspace{3pt} \sum\limits_{t_i\in T_i} w_{it_i}=1 & \hspace{3pt} i\in I \label{eq.2}\\[5pt]
& \hspace{-4pt} \sum\limits_{f_{i-1}\in F_{i-1}} z_{(i-1)f_{i-1}t_i}=w_{it_i} & \hspace{3pt} i\in I,\ t_i\in T_i \label{eq.3}\\[5pt]
& \hspace{-18pt} \sum\limits_{f_{i-1}\in F_{i-1}}\sum\limits_{t_i\in T_i}z_{(i-1)f_{i-1}t_i}\Pr\left(\max\left(t_i,f_{i-1}\right)+{\widetilde{d}}_i=f_i\right) = \sum\limits_{t_{i+1}\in T_{i+1}} z_{if_it_{i+1}} & \hspace{3pt} i\in I,\ f_i\in F_i \label{eq.4}\\[5pt]
& \hspace{-18pt} \sum\limits_{f_{i-1}\in F_{i-1}}\sum\limits_{t_i\in T_i}z_{(i-1)f_{i-1}t_i}\Pr\left(\max\left(t_i,f_{i-1}\right)+{\widetilde{d}}_{i}> L+O\right) = 0 & \hspace{3pt} i\in I \label{eq.5}\\[5pt]
&\hspace{5 pt} w_{it_i}\in\{0,1\}  & \hspace{3pt} i\in I,\ t_i\in T_i \label{eq.6}\\[5pt]
&\hspace{5 pt} 0\le z_{if_it_{i+1}}\le1  & \hspace{3pt} i\in I^0,\ f_i\in F_i,\ t_{i+1}\in T_{i+1} \label{eq.7} 
\end{eqnarray}

Objective function (\ref{eq.1}) minimizes the expected value of the total cost. Constraint (\ref{eq.2}) ensures that we assign exactly a single appointment time to each patient to avoid any missing or duplicated appointments. Constraint (\ref{eq.3}) establishes a relationship between variables $z_{\mathrm{(}i\mathrm{-}\mathrm{1)}f_{i\mathrm{-}\mathrm{1}}t_i}$ and $w_{it_i}$. This constraint implies that if $w_{it_i}$ is equal to 0, then all corresponding variables $z_{\mathrm{(}i\mathrm{-}\mathrm{1)}f_{i\mathrm{-}\mathrm{1}}t_i}$ are equal to 0. This is consistent with the definition of variables$\mathrm{\ }z_{\mathrm{(}i\mathrm{-}\mathrm{1)}f_{i\mathrm{-}\mathrm{1}}t_i}$. When $w_{it_i}$ is equal to 1, the sum of variables $z_{\mathrm{(}i\mathrm{-}\mathrm{1)}f_{i\mathrm{-}\mathrm{1}}t_i}$ on the left-hand side of constraint (\ref{eq.3}) must be equal to 1. This reflects that for a fixed $i\mathrm{\in }I\mathrm{,\ and\ }t_i\mathrm{\in }T_i$, $z_{\mathrm{(}i\mathrm{-}\mathrm{1)}f_{i\mathrm{-}\mathrm{1}}t_i}$ variables represent the probability distribution of the finish time of service to the patient ($i\mathrm{-}\mathrm{1}$). Constraint (\ref{eq.4}) consecutively compute the values of variables $z_{if_it_{i\mathrm{+1}}}$ using conditional probability relations based on variables $z_{\mathrm{(}i\mathrm{-}\mathrm{1)}f_{i\mathrm{-}\mathrm{1}}t_i}$ and the transition probability $\mathrm{Pr(max}\mathrm{}\mathrm{(}t_i,f_{i\mathrm{-}\mathrm{1}}\mathrm{)+}{\tilde{d}}_{i }\mathrm{=}f_i\mathrm{)}$. The right-hand side of this constraint calculates the probability that the appointment of patient $i$ finishes at the time $f_i$. The left-hand side of the constraint (\ref{eq.4})  also computes the same probability by multiplying $z_{\mathrm{(}i\mathrm{-}\mathrm{1)}f_{i\mathrm{-}\mathrm{1}}t_i}$, which represents the probability that the appointment of the patient $(i-1)$ finishes at the time $f_{i\mathrm{-}\mathrm{1}}$ and patient $i$ is scheduled at the time $t_i$,  by the probability $\mathrm{Pr}\mathrm{}\mathrm{(max}\mathrm{}\mathrm{(}t_i,f_{i\mathrm{-}\mathrm{1}}\mathrm{)+}{\tilde{d}}_{i }\mathrm{=}f_i\mathrm{)}$. This represents the probability that the duration of the appointment for patient $i\ $is such that we finish patient $i$ at the time $f_i\mathrm{.\ }$ In modeling constraint (\ref{eq.4}), we assume that prior scheduling decisions do not affect the service time distributions of subsequent patients. Constraint (\ref{eq.5}) ensures that all patients are served before the closing time of the clinic.

\begin{theorem}
\label{thm:model_validity}
Model (M1) is a valid formulation for the appointment scheduling problem.
\end{theorem}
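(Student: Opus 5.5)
We prove the statement by establishing a one-to-one correspondence between appointment schedules and feasible solutions of (M1). Under this correspondence, the objective value (\ref{eq.1}) of a feasible solution will equal the true expected total cost of its schedule. Fix a schedule, i.e., an appointment time $\hat t_i\in T_i$ for every $i\in I$. Set $w_{i\hat t_i}=1$ and all other $w$ to $0$, so that (\ref{eq.2}) and (\ref{eq.6}) hold. Let $\tilde f_i$ denote the random finish time of patient $i$, defined by the recursion $\tilde f_0=0$ and $\tilde f_i=\max(\hat t_i,\tilde f_{i-1})+\tilde d_i$. The candidate $z$ is
\[
z_{if_it_{i+1}}=\Pr(\tilde f_i=f_i)\,\mathbf{1}[t_{i+1}=\hat t_{i+1}],
\]
with the indicator dropped for $i=|I|$, where $t_{i+1}$ is the dummy value.

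\textbf{Step 1 (uniqueness of $z$ given $w$).} We argue by induction on $i$ that (\ref{eq.3})--(\ref{eq.4}) force exactly this $z$. For the base case, $F_0=\{0\}$ and $T_1=\{0\}$, so (\ref{eq.3}) gives $z_{00\,0}=1$, which is $\Pr(\tilde f_0=0)=1$. For the inductive step, assume the claim holds for $i-1$. The left-hand side of (\ref{eq.4}) then reduces to
\[
\sum_{f_{i-1}}\Pr(\tilde f_{i-1}=f_{i-1})\Pr\big(\max(\hat t_i,f_{i-1})+\tilde d_i=f_i\big).
\]
Since $\tilde d_i$ is independent of $\tilde f_{i-1}$, this equals $\Pr(\tilde f_i=f_i)$ by the law of total probability. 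Hence $\sum_{t_{i+1}}z_{if_it_{i+1}}=\Pr(\tilde f_i=f_i)$. Constraint (\ref{eq.3}) for $i+1$ forces $z_{if_it_{i+1}}=0$ whenever $t_{i+1}\neq\hat t_{i+1}$, so all of the mass sits on $\hat t_{i+1}$. This proves the claim for $i$. Nonnegativity, which follows from (\ref{eq.7}), is what rules out redistributing mass; the bound $z\le 1$ holds automatically. We also check that the support of $\tilde f_i$ is contained in $F_i$. This follows because $\tilde f_i\ge\sum_{i'\le i}t^{\min}_{i'}$, which is exactly the lower bound used to define $F_i$. The bound $\tilde f_i\le L+O$ is supplied by (\ref{eq.5}), addressed next.

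\textbf{Step 2 (feasibility $\Leftrightarrow$ closing-time requirement).} With $z$ identified, the left-hand side of (\ref{eq.5}) equals $\Pr(\tilde f_i>L+O)$. The constraint therefore holds if and only if the schedule completes every patient by $L+O$ almost surely. Consequently, feasible $(w,z)$ correspond exactly to admissible schedules. Conversely, every feasible $w$ defines a schedule, and Step 1 shows its $z$ is unique. This gives the bijection.

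\textbf{Step 3 (objective).} Substituting $z$ into (\ref{eq.1}) gives, for each $i$,
\[
\mathbb{E}\big[c^p_{i\tilde f_i\hat t_{i+1}}+c^d_{i\tilde f_i\hat t_{i+1}}+c^o_{i\tilde f_i\hat t_{i+1}}\big].
\]
The waiting and idle terms are exactly $\alpha^p\mathbb{E}[\tilde f_i-\hat t_{i+1}]^+$ and $\alpha^d\mathbb{E}[\hat t_{i+1}-\tilde f_i]^+$, the expected waiting time of patient $i+1$ and the expected provider idle time before that patient. For overtime, conditioning on $\tilde f_i$ and using independence of $\tilde d_{i+1}$ shows that the conditional expectation in $c^o$ equals
\[
\mathbb{E}\big[\max(L,\tilde f_{i+1})-\max(L,\tilde f_i)\mid\tilde f_i\big].
\]
Summing over $i$ telescopes to $\alpha^o\,\mathbb{E}[\max(L,\tilde f_{|I|})-L]$, which is the expected overtime cost. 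For the dummy last index, all cost coefficients are zero.

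The main obstacle is Step 1, specifically the role of (\ref{eq.3}). It must be shown that, although $z$ is a continuous variable, the combination of (\ref{eq.3}) with nonnegativity pins $z$ to the product of the finish-time distribution and the indicator of the chosen slot, and leaves no fractional spreading across slots. The key observation is that (\ref{eq.3}) holds for every $t_{i+1}$, so every slot with $w_{(i+1)t_{i+1}}=0$ receives zero mass. The remaining care is the bookkeeping at the dummy indices $0$ and $|I|+1$.
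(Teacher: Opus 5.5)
Your proof is correct and follows the paper's route. The paper runs an induction over patients, using the law of total probability and the independence of $\tilde d_i$ from the previous finish time, to identify $z_{if_it_{i+1}}$ with $\Pr(\tilde f_i=f_i)$ on the selected slot, and then recovers the expected cost by conditioning on the finish time and the next appointment slot. Your version is tighter: you derive the $z$ values from (\ref{eq.3}), (\ref{eq.4}) and nonnegativity, which is the uniqueness direction, whereas the paper mainly checks that the intended probabilities satisfy the recursion, and you also treat constraint (\ref{eq.5}) and the telescoping of the overtime terms explicitly, both of which the paper leaves implicit.
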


In this theorem, the validity of the model means that probability values represented by state variables are computed correctly, based on the appointment variables. The proof of Theorem~\ref{thm:model_validity} is provided in the supplementary materials. To illustrate how the proposed model works, especially in addressing service time uncertainty, we provide a numerical example in the supplementary materials.

\subsection{Incorporating No-Show and Unpunctuality} \label{AS:Sec32}

\noindent In this section, we enhance our model by incorporating stochastic arrival times and no-shows to further align it with the unpredictable nature of real-world healthcare scheduling scenarios. We introduce the random variable ${\tilde{a}}_{it}$ to represent time perturbations in the arrival of patient \textit{i} from his/her appointment scheduled at time $t$. A negative value of ${\tilde{a}}_{it}$ indicates that the patient arrives earlier than expected, whereas a positive value indicates a delay. Moreover, we define parameter ${\pi }_{it}$ to represent the probability of a no-show for patient \textit{i} at appointment time \textit{t}. These enhancements are crucial for capturing the dynamic and uncertain aspects of patient attendance behavior. We also note that these stochastic parameters take into account the patient-and-time-dependency simultaneously.

To integrate these factors into our state-variable model, we modify the transition probabilities in constraints (\ref{eq.4}) and (\ref{eq.5}). The revised probabilities account for variations in arrival times, as denoted by ${\tilde{a}}_{it}$, and the probability of patient no-shows, as represented by ${\pi }_{it}$. Therefore, the updated transition probability in constraint (\ref{eq.4}), which now incorporates the stochastic nature of patient arrivals and no-show probabilities, is formulated as:
		\begin{align}
		&\mathrm{(1-}{\pi}_{it}\mathrm{)Pr}\mathrm{}\mathrm{(max}\mathrm{}\mathrm{(}t_i\mathrm{+}{\tilde{a}}_{it},f_{i\mathrm{-}\mathrm{1}}\mathrm{)+}{\tilde{d}}_i\mathrm{=}f_i\mathrm{)}+{\pi}_{it}{\mathrm{Pr} \left(\left({\mathrm{max} \left(t_i,f_{i\mathrm{-}\mathrm{1}}\right)\ }\right)\mathrm{=}f_i\right)\ }.  \label{eq.8}
		\end{align}
Similarly, the revised probability for constraint (\ref{eq.5}) is formulated as:
		\begin{align}
		& \mathrm{(1-}{\pi }_{it}\mathrm{)Pr}\mathrm{}\mathrm{(max}\mathrm{}\mathrm{(}t_i\mathrm{+}{\tilde{a}}_{it},f_{i\mathrm{-}\mathrm{1}}\mathrm{)+}{\tilde{d}}_i\mathrm{> } L + O\mathrm{)} . \label{eq.9}
		\end{align}

In (\ref{eq.8}), $(1-\pi_{it}) \mathrm{Pr}\left(\mathrm{max}\left(t_i + \tilde{a}_{it}, f_{i-1}\right) + \tilde{d}_i = f_i\right)$ and ${\pi}_{it}  \mathrm{Pr}\left(\mathrm{max} \left(t_i, f_{i-1}\right) = f_i\right)$ represent the transition probabilities for cases where the patient shows up and does not show up, respectively. In both scenarios, the transitions result in a finish time of $f_i$ for the end of service to patient $i$. 
Similarly, in (\ref{eq.9}), the transition probabilities account for scenarios where the end of service to patient $i$ extends beyond the overtime period. To avoid accounting for the waiting time of no-show patients, we need to incorporate the effect of no-shows in the waiting coefficient \( c^p_{if_it_{i+1}} \) by redefining it as  
$c^p_{if_it_{i+1}}=(1-\pi_{it}){\alpha }^p{\left[f_i-t_{i+1}\right]}^+$. These modifications in the transition probabilities and the waiting coefficient ensure that our model accurately reflects the stochastic nature of patient arrival times and no-shows when determining the appointment start and finish times. This enhances the model's applicability and reliability in practical healthcare scheduling cases.

A key assumption in our proposed model is the availability of patient-dependent and patient-and-time-dependent probability distributions for service times, arrival deviations, and no-show behavior. While this assumption enables a rich and flexible modeling framework, in practice such distributions must be estimated from historical data that may be limited or noisy. In the remainder of this subsection, we discuss how these distributions can be learned and validated in a real clinical setting. Modern clinics routinely collect operational data through electronic health record (EHR) systems and appointment scheduling platforms. These systems provide time-stamped data on scheduled appointment times, actual arrival times, service start and end times, and attendance outcomes. From these records, one can construct patient-level histories capturing service durations, arrival deviations, and no-show occurrences. In addition, relevant covariates such as patient demographics, visit type (e.g., new versus returning), time of day, day of week, and external factors such as weather or traffic conditions can be incorporated to enrich the dataset.

When the amount of data per patient or per patient–time pair is limited, direct empirical estimation of distributions may lead to high variance and unreliable estimates. To address this challenge, data-driven predictive models based on historical electronic medical records can be employed to estimate patient-specific and time-dependent uncertainty parameters. For instance, logistic regression and probabilistic models have been widely used to estimate no-show probabilities by incorporating patient demographics, appointment characteristics, and historical attendance behavior (\citealp{alaeddini2011probabilistic, torres2015risk}). More recently, machine learning approaches such as random forests, gradient boosting, and neural networks have been used to jointly predict no-shows and service durations, capturing complex nonlinear relationships and improving prediction accuracy (\citealp{srinivas2021consultation}). Similarly, patient arrival behavior and punctuality can be modeled using classification and regression-based machine learning techniques that account for patient history, appointment timing, and external factors (\citealp{srinivas2020machine, abu2022multi}). In addition, feature selection and multivariate analysis methods can be used to identify the most significant predictors and reduce model complexity, thereby improving robustness in data-sparse settings (\citealp{leiva2025predictive}). To ensure the reliability of the estimated distributions, these predictive models are typically validated using train-test splits or cross-validation techniques, with performance assessed through standard metrics such as AUC, calibration measures, or prediction error.

Overall, although the exact distributions are not directly observable, a combination of data-driven statistical methods and careful validation enables the practical implementation of the proposed modeling framework in real clinical settings.

\subsection{Incorporating Reminders} \label{AS:Sec33}

\noindent In this section, we augment our model to encompass a realistic scenario in healthcare scheduling that is when patients notifying the clinic in advance if they are unable to attend their appointment. This scenario is particularly relevant for clinics that utilize reminder systems and prompt patients to confirm or cancel their appointments.

To model this new feature, we introduce a probability ${\pi }^{info}$ which represents the likelihood of patients notifying the clinic about their no-show in response to reminders. Using this probability, we can evaluate the effectiveness of the reminder system in prompting patient responses regarding their appointment attendance.

The incorporation of ${\pi }^{info}$ necessitates modifications to the transition probabilities in constraints (\ref{eq.4}) and (\ref{eq.5}). These adjustments are essential to accurately represent the dynamics of the system when patients proactively communicate their attendance or absence. By considering stochastic service times, arrivals, no-shows, and the probability of responding to reminders, we formulate the new transition probability for constraint (\ref{eq.4}) as:
\begin{equation}
\begin{split}
\pi_{it}\left[\pi^{info}\mathbf{1}\left(f_{i-1}=f_i\right) + (1-\pi^{info})\Pr\left(\max(t_i,f_{i-1})=f_i\right)\right] \\
+(1-\pi_{it})\Pr\left(\max(t_i+\tilde{a}_{it},f_{i-1})+\tilde{d}_i=f_i\right).
\end{split} 
\label{eq.10}
\end{equation}

In (\ref{eq.10}), $\pi_{it}\pi^{info}\mathbf{1}\left(f_{i-1}=f_i\right)$ represents the scenario where the patient does not show up but informs the clinic of their no-show, while $\pi_{it}(1-\pi^{info})\Pr\left(\max(t_i,f_{i-1})=f_i\right)$ addresses the case where the patient fails to show up without prior notification. 

It is important to note that the validity of this enhanced model originates from the validity of the original model as demonstrated in supplementary materials. The primary difference in the enhanced model is in the updated transition probabilities which now accommodate the added detail of patient responses to reminders.

\section{Computational experiments}\label{AS:CR}

\noindent Our computational analysis aims to evaluate the performance of the proposed state-variable model through a series of targeted questions, each designed to assess a critical aspect of healthcare appointment scheduling. These computational analyses will aim at answering the following questions:

\begin{enumerate}
\item  Does our proposed model demonstrate computational efficiency?

\item What is the impact of incorporating different patient health states on the appointment scheduling efficiency when addressing patient-dependent service times?

\item  What is the impact of patient-and-time-dependent unpunctuality on the appointment scheduling efficiency?

\item What is the impact of considering patient-and-time-dependent no-shows on the appointment scheduling efficiency?

\item What is the impact of incorporating reminders in encouraging patients to disclose their no-show behavior on the appointment scheduling efficiency?

\item What is the impact of increased patient show-up rates, driven by incentives, on the appointment scheduling efficiency?

\end{enumerate}

To design our computational experiments, we consider the three key factors on clinic operational efficiency including service times, stochastic arrivals, and no-shows. As it will be discussed in Section \ref{sec:4.1.1}, we categorize service times into four levels and both stochastic arrivals and no-shows into five levels to consider the independent and simultaneous effect of patient and time dependency in these factors. Our computational experiments include the following analyses to answer the above questions:

\begin{enumerate}
\item  To answer Question 1, first, we evaluate the computational efficiency of the model by solving several instances using CPLEX. This assessment will provide insights into the practicality and scalability of the model in various settings.

\item  To answer Questions 2, 3, and 4, we separately explore the effects of incorporating patient-dependent service times, and patient-and-time-dependent stochastic arrival times, and no-shows into the scheduling model. 

\item  To answer Question 5, we assess the influence of patient information-sharing through responding to reminders on appointment scheduling. This includes studying how encouraging patients to proactively communicate their attendance intentions, particularly through no-show disclosures in response to reminders, affect scheduling efficiency.
 
\item To address Question 6, we introduce a scaling parameter to adjust patient show-up probabilities. This parameter is essential for analyzing how changes in attendance rates, influenced by incentives or negative clinic experiences, impact the appointment scheduling efficiency.

\end{enumerate}

In the following, first we explain the instance generation and the different settings for HS, SA, and NS factors in Section \ref{sec:4.1}. Then in Section \ref{sec:4.2}, we provide the results of the computational experiments and provide several managerial insights.

\subsection{Instance generation} \label{sec:4.1}

\noindent To generate instances, we consider session lengths of 150, 210, and 270 minutes to reflect diversity in clinical practices as in \citep{Salzarulo2016Beyond,Klassen2009Improving}. Moreover, we consider a predetermined overtime allowance of 60 minutes. Our model incorporates three cost parameters including ${\alpha }^d$, ${\alpha }^p$, and ${\alpha }^o$ which represent the cost per minute for healthcare provider idleness, patient wait time, and overtime, respectively. In our research, the cost ratio factor $({\alpha }^d\mathrm{/}{\alpha }^p)$ represents the relative importance of healthcare provider time to patient waiting time. Following \citet{Klassen2009Improving} and \citet{Salzarulo2016Beyond}, we explore the cost ratio across three distinct levels including 1, 5, and 10, to understand its impact on operational efficiency. Moreover, the overtime cost is set to 1.5 times higher than the idle time cost \citep{Salzarulo2016Beyond}. We investigate the above-mentioned settings in instances with 10, 12, and 14 patients to assess the effectiveness of the model across various clinical settings. This approach enables us to thoroughly evaluate the interplay of these diverse factors and their collective impact on the efficiency of clinical appointment scheduling. 

Following \citet{Klassen2009Improving}, we consider no-show rates ranging from 0\% to 30\% that are generated using a uniform distribution. To generate realistic values for unpunctuality, we assume an average early arrival time of 10 minutes. The data is derived from several normal distributions with means of -5, -10, and -15 minutes and a standard deviation of 1.7  as suggested by \citet{Cayirli2008} and \citet{Salzarulo2016Beyond}.

\subsubsection{Service times} \label{sec:4.1.1}

\noindent 
We generate Set 1 of our instance to address Question 2 presented at the beginning of Section 4. In this set, as in \citet{Salzarulo2016Beyond}, we model the service times as probability functions of patients' health states. In the following, we consider four levels of information on patients' health states. Each level progressively incorporates more complexity regarding patient-specific details, as outlined in Table \ref{tab:HSTable} and detailed as follows:

\noindent \textbf{HS0 (Level 1):} This baseline level does not incorporate any specific health information and assumes that the service duration for all patients follows a general distribution representative of the average patient population.

\noindent \textbf{HS1 (Level 2):} At this level, patients are classified as either returning or new. We consider a truncated normal distribution with mean $\mu_i$ and standard deviation $\sigma_i$ for patient $i$, using three-sigma bounds on each side, with the parameters estimated by \citet{Salzarulo2016Beyond} as follows:

\vspace{-20pt}

\begin{align}
\ln\left(\mu_i\right) &= 2.329RP + 2.247NP, \label{eq.12}\\
\sigma_i &= 0.658. \label{eq.13}
\end{align}

\noindent \textbf{HS2 (Level 3):} Here, we incorporate more patient-specific information and categorize patients into four health states including Low Health, Moderate Health, High Health, and Excellent Health. In addition, we consider factors such as prescription medication (PM), physical examination (PE), total chronic conditions (TC), and unspecified visit reasons (NR). The log-transformed mean examination time is computed as follows \citep{Salzarulo2016Beyond}:
\vspace{-1pt}
\begin{gather}
\ln\left(\mu_i\right) = 3.088H^{Low}_i + 2.430H^{Mid}_i + 2.341H^{High}_i + 2.184H^{Perf}_i \notag\\
+ 0.147TC - 0.106NR + 0.315PE - 0.102PM, \label{eq.14}
\end{gather}

with a standard error of ${\sigma }_i=0.644$.

\noindent \textbf{HS3 (Level 4):} This advanced level includes the main effects of HS2 and additional two-way interactions. At this level, we calculate the log transformation of i${}^{th}$ patient's mean examination duration and the standard deviation as follows \citep{Salzarulo2016Beyond}:
\vspace{-1pt}
\begin{align}
{\mathrm{ln} \left({\mu }_i\right)\ }=3.112H^{Low}_i+2.462H^{Mid}_i+2.388H^{High}_i+2.245H^{Perf}_i+0.145TC- \notag\\
\quad 0.116NR+0.265PE-0.108PM-0.105NP-0.369BC+0.785NP(BC), \label{eq.15} 
\end{align}
\vspace{-1.5cm}
\begin{align}
{\sigma}_i &= 0.642. \label{eq.16}
\end{align}

Table \ref{tab:HSTable} presents a concise summary of the key factors considered at each health state level. This Table outlines the presence (+) or absence (-) of each factor across HS0, HS1, HS2, and HS3 \citep{Salzarulo2016Beyond}.

\begin{table}[htbp] 
\centering
\caption{Key factors at each health state level \citep{Salzarulo2016Beyond}}
\label{tab:HSTable}
\fontsize{10}{10}\selectfont
\begin{tabular}{|p{2.5in}|>{\centering\arraybackslash}p{0.5in}|>{\centering\arraybackslash}p{0.5in}|>{\centering\arraybackslash}p{0.5in}|>{\centering\arraybackslash}p{0.5in}|}
\hline
\multirow{2}{*}{\parbox[c][2\baselineskip][c]{2.5in}{\centering Key Factors}} & \multicolumn{4}{c|}{Health State} \\
\cline{2-5}
 & \centering HS0 & \centering HS1 & \centering HS2 & \centering\arraybackslash HS3 \\ 
\hline
Return Patients (RP) & - & + & - & - \\ 
\hline
New patients (NP) & - & + & + & - \\ 
\hline
Low Health- (H${}_{i}^{Low}$) & - & - & + & + \\ 
\hline
Moderate Health- (H${}_{i}^{Mid}$) & - & - & + & + \\ 
\hline
High Health- (H${}_{i}^{Hi}$) & - & - & + & + \\ 
\hline
Excellent Health- (H${}_{i}^{Perf}$) & - & - & + & + \\ 
\hline
Total Chronic (TC) & - & - & + & + \\ 
\hline
No Reason (NR) & - & - & + & + \\ 
\hline
Physical Examination (PE) & - & - & + & + \\ 
\hline
Prescription Medication (PM) & - & - & + & + \\ 
\hline
Birth Control (BC) & - & - & - & + \\ 
\hline
New Patient $\times$ Birth Control NP(BC) & - & - & - & + \\ 
\hline
\end{tabular}
\end{table}

To assess the impact of varying levels of health state information levels on appointment scheduling, we generate 10 instances for each combination of clinic time, cost ratio, and the number of patients.  This approach yields a substantial dataset comprising 1080 unique instances that offers a rich basis for in-depth analysis. This comprehensive approach allows us to perform a detailed investigation of how health state information affects appointment scheduling in clinical settings.

\subsubsection{Patient Unpunctuality}
\noindent 
We generate Set 2 of our instance to address Question 3 presented at the beginning of Section 4. In this set, we examine different levels of patient unpunctuality categorized as SA0, SA1, SA2, SA3, and SA4 where each represents varying degrees of information about patient unpunctuality.

\noindent \textbf{SA0 (Level 1):} This baseline level assumes all patients are punctual and arrive on time.

\noindent \textbf{SA1 (Level 2):} Here, we introduce a single unpunctuality distribution for all patients that is uniform across various time slots. This setting represents the case that the clinic collects unpunctuality data regardless of the patient information and the appointment time.

\noindent \textbf{SA2 (Level 3):} We consider different unpunctuality distributions for different time slots. We note that the distribution considered for each time slot is independent of the patient information, and it applies to the time slot regardless of the patient information. This setting represents the case that the clinic collects the unpunctuality data considering the appointment time regardless of the patient information.

\noindent \textbf{SA3 (Level 4):} At this level, we consider different patient-dependent unpunctuality distributions that apply to all time slots. Essentially, it acknowledges the variance in patient arrival patterns but assumes these patterns are consistent across various times of the day. This setting models a scenario where the clinic gathers unpunctuality data unique to each patient but does not differentiate the data based on appointment times.

\noindent \textbf{SA4 (Level 5):} In the most detailed level, SA4, we introduce patient-and-time-dependent unpunctuality distributions. This setup represents the case where the clinic collects detailed unpunctuality data considering both the patient behavior and the appointment times.

In the following, we first explain the formulas for SA4 in which the unpunctuality of patient $i$ is patient-and-time-dependent. Then, we provide information about other levels that are simplified versions of the fifth level. For SA4, we model patient unpunctuality using a truncated normal distribution with a mean ${\mu}^\prime_{it}$ and standard deviation ${\sigma }_{arrival}$. For this purpose, we present the following equation:
\begin{gather}
{\mu}^\prime_{it}={\mu}^\prime_i+\beta\left(\overline{\mu^\prime }\left(\frac{{\alpha }_t}{\overline{\alpha }}\right)-\overline{\mu^\prime}\right), \label{eq.17}
\end{gather}

\noindent where ${\mu}^\prime_i$ stands for the mean unpunctuality of patient $i$ during regular traffic. Besides, the coefficient $\beta $ is a control parameter to increase or decrease the effect of traffic. Coefficient ${\alpha }_t$ is the traffic index that reflects the relative traffic volume at time slot $t$. The average traffic index $\overline{\alpha }$ and average unpunctuality $\overline{\mu^\prime}$  are calculated over all time slots and patients, respectively, as follows. 
\begin{gather}
\overline{\alpha }=\frac{\sum_{t\in T}{{\alpha }_t\mathrm{\ }}}{|T|}. \label{eq.18}\\
\overline{\mu^\prime}=\frac{\sum_{i\in I}{{\mu }^\prime_i\mathrm{\ }}}{|I|}. \label{eq.19}
\end{gather}

In Eq.(\ref{eq.18}), for the values of ${\alpha }_{t}$ we have used the traffic data of arterial roads in urban areas of Ohio in 2016 \citep{OhioDOT2023}. Also, in Eq.(\ref{eq.19}), we generate patient-specific mean unpunctuality ${\mu }^\prime_i$ using a uniform distribution in the range of [-25, -5] minutes. This results in $\mathrm{|}I\mathrm{|\times |}T\mathrm{|}$ distributions for ${\mu }^\prime_{it}$ in SA4. In all these truncated normal distributions, we considered the domain of [${\mu }^\prime_{it}-3{\sigma }_{it}$, ${\mu }^\prime_{it}+3{\sigma }_{it}$] for the random parameter where ${\sigma }_{it}$ is the standard deviation of the distribution.

To construct the unpunctuality models for SA1, SA2, and SA3, we average {}${\mu }^\prime_{it}$ over different dimensions. For SA3, we calculate the average of the parameter ${\mu }^\prime_{it}$ over time slots yielding a unique truncated normal distribution for each patient. For SA2, we average ${\mu }^\prime_{it}$ over patients for each time slot which yields a truncated normal distribution for each time slot. Finally, for SA1, {}parameter ${\mu }^\prime_{it}$ is averaged over both patients and time slots which creates a single distribution applicable to all patients and time slots.

To assess the impact of varying levels of detail about patient unpunctuality on appointment scheduling, we generate 10 instances for each combination of clinic time, cost ratio, and the number of patients that results in a total of 1350 instances. Such an extensive evaluation is crucial to thoroughly investigate how different levels of unpunctuality information affect appointment scheduling. This will enhance our understanding of the dynamics involved in managing patient arrivals in various healthcare settings.

\subsubsection{No-shows}
\noindent 
We generate Set 3 of our instances to address Question 3 presented at the beginning of Section 4, where we have integrated no-shows as a main factor. We consider five distinct levels including NS0, NS1, NS2, NS3, and NS4 to represent varying degrees of information about patient no-shows.

\noindent \textbf{NS0 (Level 1):} At this level, we consider a basic scenario with no specific no-show data in which all patients are assumed to attend their appointments.

\noindent \textbf{NS1 (Level 2):} Here a single distribution for no-shows applicable to all patients across all time slots. The distribution is valid for all time slots and all patients as we do not have time and patient dependency. This level models a scenario where a clinic collects no-show data regardless of the patient and the appointment time.

\noindent \textbf{NS2 (Level 3):} This level introduces time-dependent no-show distributions. We consider a specific distribution for each timeslot that applies to all patients assigned to that timeslot. This setting represents a case in which clinics collect no-show data based on appointment times while ignoring individual patient characteristics.

\noindent \textbf{NS3 (Level 4):} Here, we focus on patient-dependent no-show distributions that are consistent across different time slots. This approach represents clinics that collect individual patient no-show data regardless of their appointment times.

\noindent \textbf{NS4 (Level 5):} This is the most detailed level, where no-show probabilities are tailored to individual patients and specific time slots. This setting depicts a clinic collecting comprehensive no-show data considering both patient behavior and appointment times.

In the following, we first explain the process to generate NS4, and then we provide information about NS3, NS2, NS1, and NS0 levels. For NS4, the no-show probability for each patient\textit{ i} at time slot $t$ is set to ${\pi }_{it}\mathrm{=}{\pi }^{\mathrm{'}}_i\mathrm{+}{\pi }^{\mathrm{''}}_t$, where parameter ${\pi }^{\mathrm{''}}_t$ is a time-dependent no-show parameter and parameter ${\pi }^{\mathrm{'}}_i$ is a patient-dependent constant that enables ${\pi }_{it}$ to take varying values for different patients at a particular time slot $t$. For ${\pi }^{\mathrm{''}}_t$, we refer to Figure 1-a in \citet{Kong2020} for time-dependent no-show probabilities. Besides, we uniformly generated ${\pi }^{\mathrm{'}}_i$ in the interval [-0.3, 0.3]. 

To generate NS3, we calculate the average of ${\pi }_{it}$ over different time slots for each patient and obtained a distinct truncated normal distribution for each patient. For NS2, we compute the average of ${\pi }_{it}$ across patients for each time slot which yields a unique truncated normal distribution for each time slot. Furthermore, for NS1, we calculate the average of ${\pi }_{it}$ across both patients and time slots and determine one truncated normal distribution for all patients and all time slots. 

To assess the impact of varying no-show levels on appointment scheduling, we generate 10 instances for each combination of clinic time, cost ratio, and the number of patients that results in a total of 1350 instances.

\subsection{Computational results} \label{sec:4.2}

\noindent In this section, we report the computational results to answer questions presented at the beginning of Section \ref{AS:CR}. To solve the proposed model, we use CPLEX 12.10 on a PC with an AMD Rome 7532 @ 2.40 GHz 256M cache L3 CPU and 16 GB of RAM. We considered a time limit of 24 hours for each instance. Remarkably, the majority of instances were resolved well before this limit that shows the effectiveness of the model in handling complex scheduling problems. This performance indicates not only the computational viability of the proposed approach for real-world application but also its potential for scalability to larger or more complex instances in healthcare scheduling.

\subsubsection{Tractability of the model}
\noindent 
In the following, we perform a comprehensive assessment of our proposed appointment scheduling model and focus particularly on its performance and computational efficiency. We use the CPLEX solver and extensively test the model across various settings. Table \ref{tab:CPLEXTABLE} presents the summary of the computational results of the CPLEX on 270 instances. In this table, the first three columns indicate the settings of instances including the number of patients, clinic time, and cost ratio. In the next columns, we report 1) the number of the stochastic scenarios captured by our model, 2) the number of variables, 3) the number of constraints, 4) the number of nodes exploited in the branch-and-bound tree, 5) lower bound, 6) upper bound, 7) time: computational time of the model in seconds, 8) gap(\%): which is computed by gap = $\frac{100\left(UB\ -\ LB\right)}{LB}$ waiting cost, 10) idle cost, and 11) overtime cost. We note that in all these instances we considered HS, NS, and SA at their highest level of detail that provide in-depth information about the health state, no-show, and unpunctuality of the patients. This means that the unpunctuality and no-show probabilities follow patient-and-time-dependent distributions, and service time durations follow patient-dependent distributions.

Table \ref{tab:CPLEXTABLE} reveals that the proposed model efficiently solves appointment scheduling problems with an exponential number of stochastic scenarios within a reasonable time limit. This represents a significant accomplishment, especially considering the complexity and scale of the addressed scheduling problem, which involve patient-and-time-dependent unpunctuality and no-shows, along with patient-dependent service times. The average optimality gap is 0.2$\%$ with the majority of instances achieving optimal solutions within the allocated time limit. The average number of variables and constraints are 171,323 and 12,020, respectively, which are relatively low considering the exponential number of stochastic scenarios captured by the proposed model. This highlights the efficiency and innovation of our approach.

Figure \ref{fig:finishtimeprobability} presents a visual representation of the calculated finish time probabilities as determined by our proposed model for a given instance with 14 patients. Based on Figure \ref{fig:finishtimeprobability}, we observe that the probability of finish time of the appointment for last patients becomes more uncertain compared to those of first patients. This is mainly due to the fact that the uncertainty in no-show, unpunctuality, and service durations of the first patients affect the start time of the appointment for later patients.

\begin{landscape}

\begin{table}
\caption{Computational results of the CPLEX in solving the proposed state-variable model.}\label{tab:CPLEXTABLE}
{
\fontsize{8}{10}\selectfont
 \renewcommand{\arraystretch}{1.3} 
    \begin{tabular}{@{}@{}*{14}{c}@{}}

    \hline
No. of   patients & Clinic time       & Cost ratio          & No. of scenarios
                          & V        & C     & N           & LB                      & UB                      & Time (sec)             & Gap (\%)               & Waiting Cost           & Idle Cost              & Overtime Cost          \\\hline
10                & 150               & 1                   & 9.14E+4050                  & 83031                   & 6880                   & 1551                   & 19.96                   & 19.96                   & 833                    & 0.00                  & 6.31                   & 13.65                  & 0.00                   \\
                  &                   & 5                   & 9.14E+4050                  & 83031                   & 6880                   & 2326                   & 51.08                   & 51.08                   & 856                    & 0.00                  & 25.41                  & 25.66                  & 0.01                   \\
                  &                   & 10                  & 9.14E+4050                  & 83031                   & 6880                   & 2367                   & 69.57                   & 69.57                   & 894                    & 0.00                  & 41.53                  & 28.02                  & 0.01                   \\
                  & 210               & 1                   & 8.04E+5198                  & 135903                  & 8824                   & 1987                   & 19.31                   & 19.31                   & 1554                   & 0.00                  & 5.59                   & 13.72                  & 0.00                   \\
                  &                   & 5                   & 8.04E+5198                  & 135903                  & 8824                   & 2354                   & 49.74                   & 49.74                   & 1454                   & 0.00                  & 24.49                  & 25.24                  & 0.00                   \\
                  &                   & 10                  & 8.04E+5198                  & 135903                  & 8824                   & 3070                   & 68.00                   & 68.00                   & 1735                   & 0.00                  & 41.16                  & 26.84                  & 0.00                   \\
                  & 270               & 1                   & 8.28E+6349                  & 201735                  & 10768                  & 1843                   & 19.94                   & 19.94                   & 1834                   & 0.00                  & 6.23                   & 13.71                  & 0.00                   \\
                  &                   & 5                   & 8.28E+6349                  & 201735                  & 10768                  & 2736                   & 51.05                   & 51.05                   & 2351                   & 0.00                  & 25.70                  & 25.35                  & 0.00                   \\
                  &                   & 10                  & 8.28E+6349                  & 201735                  & 10768                  & 2350                   & 69.54                   & 69.54                   & 2040                   & 0.00                  & 41.53                  & 28.01                  & 0.00                   \\\hline
12                & 150               & 1                   & 9.06E+4859                  & 101435                  & 9259                   & 8420                   & 31.45                   & 31.45                   & 9336                   & 0.00                  & 10.10                  & 20.97                  & 0.38                   \\
                  &                   & 5                   & 9.06E+4859                  & 101435                  & 9259                   & 13114                  & 81.28                   & 81.28                   & 11098                  & 0.00                  & 40.13                  & 39.70                  & 1.45                   \\
                  &                   & 10                  & 9.06E+4859                  & 101435                  & 9259                   & 10622                  & 112.21                  & 112.21                  & 8629                   & 0.00                  & 64.24                  & 44.92                  & 3.04                   \\
                  & 210               & 1                   & 6.95E+6239                  & 166043                  & 11875                  & 7638                   & 30.04                   & 30.04                   & 15247                  & 0.00                  & 9.72                   & 20.32                  & 0.00                   \\
                  &                   & 5                   & 6.95E+6239                  & 166043                  & 11875                  & 11051                  & 78.30                   & 78.30                   & 17283                  & 0.00                  & 39.42                  & 38.88                  & 0.00                   \\
                  &                   & 10                  & 6.95E+6239                  & 166043                  & 11875                  & 15599                  & 106.94                  & 106.94                  & 17161                  & 0.00                  & 63.18                  & 43.76                  & 0.00                   \\
                  & 270               & 1                   & 5.59E+7618                  & 246491                  & 14491                  & 7399                   & 31.19                   & 31.19                   & 21833                  & 0.00                  & 10.16                  & 21.03                  & 0.00                   \\
                  &                   & 5                   & 5.59E+7618                  & 246491                  & 14491                  & 6071                   & 80.43                   & 80.43                   & 18945                  & 0.00                  & 40.39                  & 40.04                  & 0.00                   \\
                  &                   & 10                  & 5.59E+7618                  & 246491                  & 14491                  & 7860                   & 110.34                  & 110.34                  & 17537                  & 0.00                  & 62.61                  & 47.74                  & 0.00                   \\\hline
14                & 150               & 1                   & 7.17E+5669                  & 119839                  & 11978                  & 14762                  & 41.12                   & 41.12                   & 21304                  & 0.00                  & 14.91                  & 19.82                  & 6.40                   \\
                  &                   & 5                   & 7.17E+5669                  & 119839                  & 11978                  & 22654                  & 107.94                  & 107.94                  & 30993                  & 0.00                  & 47.52                  & 40.32                  & 20.11                  \\
   &    &   10 &   7.17E+5669 &   119839 &   11978 &   39844 &   158.07 &   158.68 &   43079 &   0.49 &   72.48 &   47.74 &   38.47 \\
& 210 & 1  & 9.69E+7279 & 196183 & 15362 & 15203 & 32.03  & 32.15  & 42442 & 0.37 & 10.78 & 21.37 & 0.00  \\ 
                  &                   & 5                   & 9.69E+7279                  & 196183                  & 15362                  & 21997                  & 83.97                   & 84.94                   & 47453                  & 1.13                  & 41.02                  & 43.91                  & 0.00                   \\
                  &                   & 10                  & 9.69E+7279                  & 196183                  & 15362                  & 36605                  & 113.69                  & 116.76                  & 62017                  & 2.90                  & 65.43                  & 51.33                  & 0.00                   \\
                  & 270               & 1                   & 1.97E+8890                  & 291247                  & 18746                  & 4502                   & 34.58                   & 34.58                   & 18916                  & 0.00                  & 13.36                  & 21.22                  & 0.00                   \\
                  &                   & 5                   & 1.97E+8890                  & 291247                  & 18746                  & 7313                   & 88.29                   & 88.29                   & 22508                  & 0.00                  & 45.79                  & 42.50                  & 0.00                   \\
                  &                   & 10                  & 1.97E+8890                  & 291247                  & 18746                  & 16904                  & 120.30                  & 121.05                  & 43601                  & 0.60                  & 71.94                  & 49.11                  & 0.00                   \\\hline
\multicolumn{3}{l}{Average}                           &                             & 171323                  & 12020                  & 10672                  & 68.90                   & 69.11                   & 17886                  & 0.20                  & 34.86                  & 31.66                  & 2.59    \\ \hline              
    \end{tabular}
}

\end{table}
{Notes: V: No. of variables, C: No. of constraints, N: No. of nodes.}
\end{landscape}

\renewcommand{\arraystretch}{1} 

\noindent

\begin{figure}
\centering
\includegraphics[width=0.8\textwidth]{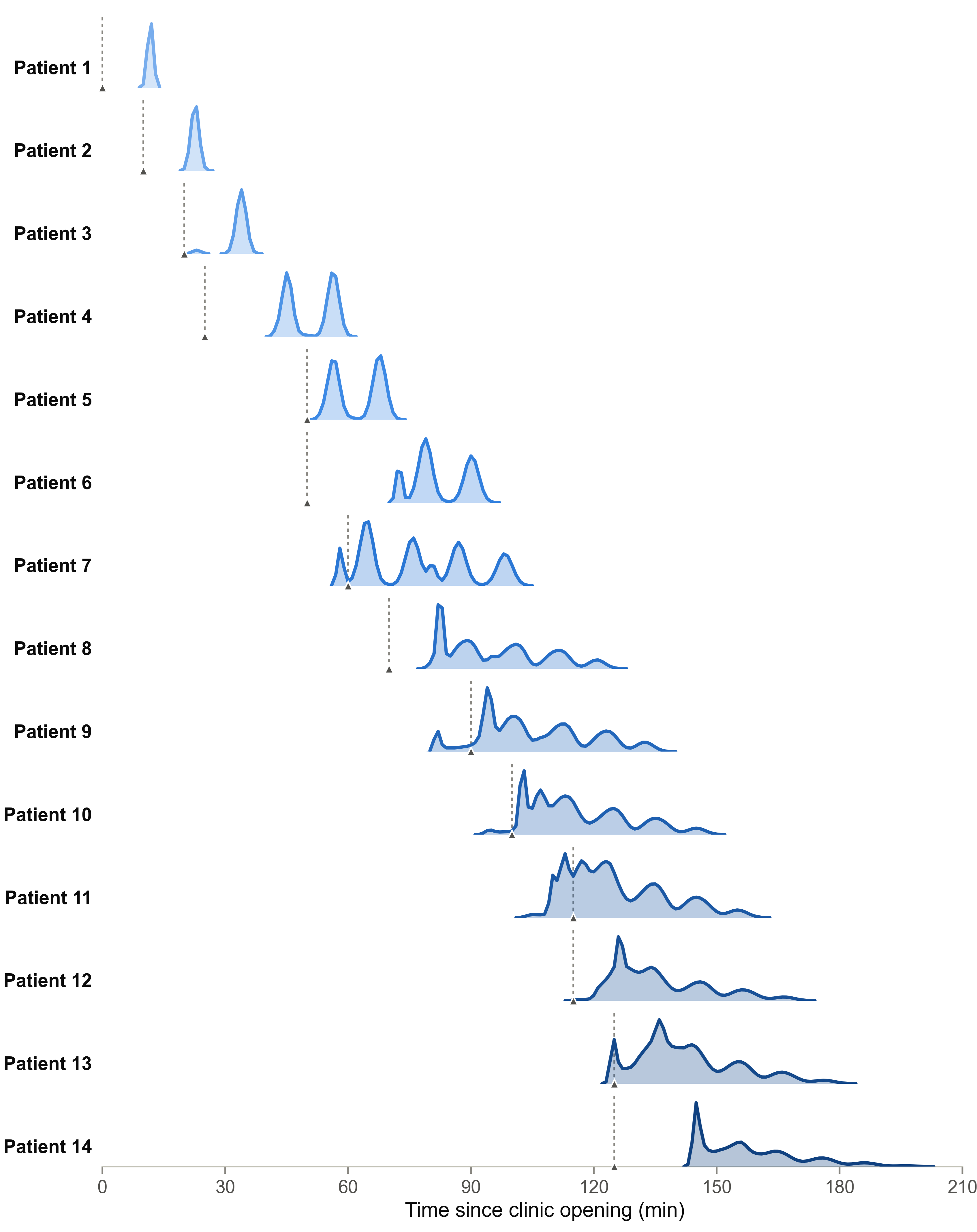}
\caption{Calculated finish time probabilities.}
\label{fig:finishtimeprobability}
\end{figure}

\subsubsection{Comparison with the Baseline Algorithm}
\label{subsec:bailey_baseline}

\noindent
In this section, we compare the proposed approach with the classical Bailey appointment rule. In the Bailey framework, the first two patients are scheduled at time zero, and the remaining patients are assigned according to a prespecified spacing rule. We considered two variants. In the first variant, denoted as the \emph{Bailey interval} rule, a fixed interval is used between consecutive patients after the first two. In the second variant, denoted as the \emph{Modified Bailey} rule, alternating intervals are used. For example, under the $(5,15)$ rule, the first two patients are scheduled at time zero, the third patient is scheduled 5 minutes later, the fourth patient 15 minutes later, and this pattern is repeated throughout the session. The corresponding results are reported in Tables~\ref{tab:bailey_interval}, \ref{tab:bailey_combo_1}, and \ref{tab:bailey_combo_2}. In all tables, the reported percentage gaps are computed relative to the optimal schedule obtained by the proposed model under the same stochastic setting.

The results show that the proposed model consistently outperforms all Bailey-based schedules in terms of total expected cost. For the fixed-interval Bailey policy, the best-performing rule is the 10-minute interval, yet it still yields an average increase of 97.29\% in total cost relative to the proposed model. By contrast, the 5-minute interval performs very poorly, with an average total-cost increase of 419.65\%. This deterioration is mainly driven by a dramatic increase in patient waiting cost, which rises by 465.12\% on average, even though provider idle cost is reduced by 47.73\%. This behavior is intuitive: when the schedule is made too dense, the provider is utilized more aggressively, but the resulting congestion shifts the burden to patients through excessive waiting. At the other extreme, larger fixed intervals such as 15 and 20 minutes substantially reduce waiting cost on average, by 9.35\% and 27.95\%, respectively, but these gains come at the expense of a very large increase in provider idle cost, equal to 254.28\% and 549.10\%, respectively. Consequently, the average total-cost increases remain very high at 245.29\% and 521.61\%. These findings indicate that fixed Bailey rules cannot adequately balance patient waiting and provider idleness when service times, no-shows, and unpunctuality are explicitly modeled.

A similar pattern is observed for the Modified Bailey rules. Among the tested combinations, the $(5,15)$ rule provides the best overall performance, but its average total cost is still 116.84\% higher than that of the proposed model. Interestingly, this rule produces only a small average change in idle cost ($-7.24\%$) and overtime cost ($-0.19\%$), but it still increases waiting cost by 124.28\%, which is sufficient to make the overall schedule clearly inferior. The $(10,15)$ and $(5,20)$ rules reduce the waiting-cost gap relative to the most congested schedules, but they do so by creating substantially larger idle and overtime penalties. In particular, the $(10,15)$ rule increases idle cost and overtime cost by 97.55\% and 8.70\% on average, respectively, which results in a 132.59\% increase in total cost. Likewise, the $(5,20)$ rule leads to a 143.02\% increase in total cost. The $(5,10)$ rule performs the worst among the Modified policies, with an average total-cost increase of 229.69\%, again due to excessive patient waiting.

\sk{The variation across cost-ratio settings can be explained by the interaction between the rigidity of Bailey-type schedules and the relative weights assigned to waiting and idle costs. When the cost ratio is low, patient waiting has a stronger influence on the total objective. In this setting, dense Bailey schedules perform poorly because they reduce provider idle time by scheduling patients aggressively, but this creates congestion and substantially increases patient waiting. As the cost ratio increases, provider idle time becomes more expensive relative to patient waiting. In such cases, Bailey schedules may appear relatively better because their dense structure keeps the provider more continuously occupied. However, this improvement is only partial because Bailey-type rules still cannot adapt appointment times to patient-dependent service durations, patient-and-time-dependent arrival behavior, and patient-and-time-dependent no-show probabilities. Therefore, the performance gap varies across cost-ratio settings because each cost ratio emphasizes a different side of the waiting-idle tradeoff, whereas the proposed model explicitly optimizes this tradeoff under the full stochastic structure.}

Overall, the comparison highlights an important limitation of Bailey-type policies in the setting studied in this paper. These rules are simple and easy to implement, but they impose a rigid appointment structure that cannot adapt to patient-dependent service-time distributions and patient-and-time-dependent no-show and unpunctuality behavior. As a result, Bailey schedules tend to overemphasize one operational objective at the expense of another: dense schedules reduce idle time but create excessive patient waiting, whereas sparse schedules reduce waiting but generate large amounts of provider idle time. In contrast, the proposed model explicitly optimizes this trade-off while accounting for the full stochastic structure of the problem. Therefore, the computational evidence suggests that the benefit of the proposed approach is not only theoretical but also practically significant when compared with a widely used baseline scheduling rule.

\begin{table}[htp]
\caption{Bailey method comparison with the optimal appointment schedule.}
\label{tab:bailey_interval}
\resizebox{\textwidth}{!}{%
\begin{tabular}{lllllll}
\hline
Bailey interval & Clinic time & Cost ratio & $\Delta_{\text{Idle Cost}}$ (\%) & $\Delta_{\text{Waiting Cost}}$ (\%) & $\Delta_{\text{Overtime Cost}}$ (\%) & $\Delta_{\text{Total Cost}}$ (\%) \\ \hline

        5        & 150  & 1           &     -48.66 &         761.79 &          -0.96 &     712.18 \\
                 &      & 5           &     -39.45 &         259.49 &           8.85 &     228.92 \\
                 &      & 10          &     -32.17 &         161.35 &          12.16 &     141.36 \\
                 & 210  & 1           &     -67.50 &         988.28 &           0.03 &     920.81 \\
                 &      & 5           &     -49.89 &         335.12 &           0.06 &     285.29 \\
                 &      & 10          &     -40.54 &         219.80 &           0.08 &     179.34 \\
                 & 270  & 1           &     -63.16 &         928.09 &           0.00 &     864.93 \\
                 &      & 5           &     -48.08 &         319.83 &           0.00 &     271.75 \\
                 &      & 10          &     -40.09 &         212.36 &           0.00 &     172.27 \\
        \hline
        Average  &      &             &     -47.73 &         465.12 &           2.25 &     419.65 \\ \hline

        10       & 150  & 1           &     -26.20 &         186.08 &          -5.17 &     154.72 \\
                 &      & 5           &       3.36 &          40.95 &           0.89 &      45.21 \\
                 &      & 10          &      28.05 &          11.29 &           1.68 &      41.02 \\
                 & 210  & 1           &     -42.71 &         262.84 &           0.01 &     220.14 \\
                 &      & 5           &      -3.15 &          61.68 &           0.01 &      58.55 \\
                 &      & 10          &      27.21 &          21.91 &           0.01 &      49.13 \\
                 & 270  & 1           &     -39.05 &         243.48 &           0.00 &     204.43 \\
                 &      & 5           &      -1.62 &          56.32 &           0.00 &      54.69 \\
                 &      & 10          &      27.96 &          19.73 &           0.00 &      47.69 \\
        \hline
        Average  &      &             &      -2.90 &         100.48 &          -0.28 &      97.29 \\ \hline

        15       & 210  & 1           &      97.48 &          38.60 &           0.38 &     136.46 \\
                 &      & 5           &     261.04 &         -22.84 &           0.72 &     238.92 \\
                 &      & 10          &     409.85 &         -39.20 &           1.03 &     371.68 \\
                 & 270  & 1           &      95.12 &          31.93 &           0.00 &     127.05 \\
                 &      & 5           &     256.56 &         -24.96 &           0.00 &     231.60 \\
                 &      & 10          &     405.62 &         -39.63 &           0.00 &     365.99 \\
        \hline
        Average  &      &             &     254.28 &          -9.35 &           0.35 &     245.29 \\ \hline

        20       & 270  & 1           &     250.14 &           1.11 &           0.24 &     251.48 \\
                 &      & 5           &     554.97 &         -36.74 &           0.47 &     518.70 \\
                 &      & 10          &     842.18 &         -48.22 &           0.68 &     794.64 \\
        \hline
        Average  &      &             &     549.10 &         -27.95 &           0.46 &     521.61 \\ \hline

\end{tabular}
}
\end{table}

\begin{table}[htp]
\caption{Modified Bailey method comparison with the optimal appointment schedule.}
\label{tab:bailey_combo_1}
\resizebox{\textwidth}{!}{%
\begin{tabular}{lllllll}
\hline
Modified Bailey & Clinic time & Cost ratio & $\Delta_{\text{Idle Cost}}$ (\%) & $\Delta_{\text{Waiting Cost}}$ (\%) & $\Delta_{\text{Overtime Cost}}$ (\%) & $\Delta_{\text{Total Cost}}$ (\%) \\ \hline

        (5,10)   & 150  & 1           &     -46.26 &         455.11 &          -3.49 &     405.37 \\
                 &      & 5           &     -34.83 &         143.02 &           4.04 &     112.24 \\
                 &      & 10          &     -25.66 &          81.23 &           5.82 &      61.40 \\
                 & 210  & 1           &     -64.88 &         604.97 &           0.02 &     540.10 \\
                 &      & 5           &     -44.95 &         190.64 &           0.03 &     145.72 \\
                 &      & 10          &     -33.36 &         115.22 &           0.04 &      81.90 \\
                 & 270  & 1           &     -60.61 &         566.10 &           0.00 &     505.50 \\
                 &      & 5           &     -43.14 &         180.47 &           0.00 &     137.33 \\
                 &      & 10          &     -32.83 &         110.47 &           0.00 &      77.63 \\
        \hline
        Average  &      &             &     -42.95 &         271.91 &           0.72 &     229.69 \\ \hline

        (5,15)   & 150  & 1           &     -28.31 &         223.49 &          -5.01 &     190.17 \\
                 &      & 5           &      -0.67 &          55.13 &           1.18 &      55.64 \\
                 &      & 10          &      22.43 &          21.01 &           2.06 &      45.50 \\
                 & 210  & 1           &     -45.16 &         310.34 &           0.01 &     265.19 \\
                 &      & 5           &      -7.78 &          79.58 &           0.02 &      71.82 \\
                 &      & 10          &      20.51 &          34.85 &           0.02 &      55.38 \\
                 & 270  & 1           &     -41.39 &         288.25 &           0.00 &     246.86 \\
                 &      & 5           &      -6.15 &          73.54 &           0.00 &      67.39 \\
                 &      & 10          &      21.34 &          32.31 &           0.00 &      53.64 \\
        \hline
        Average  &      &             &      -7.24 &         124.28 &          -0.19 &     116.84 \\ \hline

        (5,20)   & 150  & 1           &      20.35 &          99.56 &           7.99 &     127.91 \\
                 &      & 5           &      91.79 &           8.19 &          26.14 &     126.12 \\
                 &      & 10          &     151.42 &         -10.86 &          36.40 &     176.95 \\
                 & 210  & 1           &      10.83 &         146.15 &           0.00 &     156.98 \\
                 &      & 5           &      97.65 &          17.68 &           0.01 &     115.34 \\
                 &      & 10          &     173.25 &          -9.91 &           0.00 &     163.34 \\
                 & 270  & 1           &      12.61 &         133.86 &           0.00 &     146.47 \\
                 &      & 5           &      97.73 &          14.17 &           0.00 &     111.89 \\
                 &      & 10          &     173.29 &         -11.06 &           0.00 &     162.23 \\
        \hline
        Average  &      &             &      92.10 &          43.09 &           7.84 &     143.02 \\ \hline
\end{tabular}
}
\end{table}

\begin{table}[htp]
\caption{Modified Bailey method comparison with the optimal appointment schedule (continued).}
\label{tab:bailey_combo_2}
\resizebox{\textwidth}{!}{%
\begin{tabular}{lllllll}
\hline
Modified Bailey & Clinic time & Cost ratio & $\Delta_{\text{Idle Cost}}$ (\%) & $\Delta_{\text{Waiting Cost}}$ (\%) & $\Delta_{\text{Overtime Cost}}$ (\%) & $\Delta_{\text{Total Cost}}$ (\%) \\ \hline

        (10,15)  & 150  & 1           &      22.91 &          73.37 &           9.39 &     105.67 \\
                 &      & 5           &      96.66 &          -1.73 &          28.84 &     123.77 \\
                 &      & 10          &     158.14 &         -17.65 &          40.08 &     180.57 \\
                 & 210  & 1           &      13.99 &         112.78 &           0.00 &     126.78 \\
                 &      & 5           &     103.63 &           5.10 &           0.01 &     108.74 \\
                 &      & 10          &     181.89 &         -19.01 &           0.00 &     162.88 \\
                 & 270  & 1           &      15.58 &         102.17 &           0.00 &     117.75 \\
                 &      & 5           &     103.48 &           1.98 &           0.00 &     105.45 \\
                 &      & 10          &     181.69 &         -19.96 &           0.00 &     161.73 \\
        \hline
        Average  &      &             &      97.55 &          26.34 &           8.70 &     132.59 \\ \hline

        (10,20)  & 210  & 1           &      93.81 &          55.37 &           0.30 &     149.47 \\
                 &      & 5           &     254.10 &         -16.53 &           0.57 &     238.13 \\
                 &      & 10          &     399.81 &         -34.64 &           0.82 &     365.99 \\
                 & 270  & 1           &      91.65 &          47.91 &           0.00 &     139.56 \\
                 &      & 5           &     249.86 &         -18.83 &           0.00 &     231.03 \\
                 &      & 10          &     395.82 &         -35.15 &           0.00 &     360.67 \\
        \hline
        Average  &      &             &     247.51 &          -0.31 &           0.28 &     247.48 \\ \hline

        (15,20)  & 210  & 1           &     177.72 &          22.12 &          22.11 &     221.94 \\
                 &      & 5           &     412.36 &         -29.03 &          41.85 &     425.18 \\
                 &      & 10          &     629.01 &         -43.66 &          60.53 &     645.87 \\
                 & 270  & 1           &     171.03 &          16.03 &           0.00 &     187.06 \\
                 &      & 5           &     402.67 &         -31.03 &           0.00 &     371.64 \\
                 &      & 10          &     619.37 &         -44.05 &           0.00 &     575.31 \\
        \hline
        Average  &      &             &     402.03 &         -18.27 &          20.75 &     404.50 \\ \hline

\end{tabular}
}
\end{table}

\subsubsection{Analysis of Health States (HS)} \label{subsection 4.2}
\noindent 
In Table \ref{tab:HSsimulations}, we thoroughly examine the impact of varying levels of Health State (HS) on the appointment scheduling problem. In this table, for all instances, we have considered the highest levels of stochastic arrival (SA4) and no-shows (NS4) for all patients. This means we have patient-and-time-dependent unpunctuality and no-show probability distributions. For instances with the setting of HS3, we have reported the results obtained from the proposed model directly in the table. However, the solutions of instances with other health settings (i.e., HS0, HS1, and HS2) must be evaluated in the reference setting of ``HS3-SA4-NS4'' that represents the real-world situation. This approach aligns with our objective outlined in Question 2 of Section \ref{AS:CR} that aims to assess the significance of incorporating detailed patient health information into scheduling effectiveness.

In order to simulate these solutions (HS0, HS1, and HS2) in the context of the most comprehensive health state setting (HS3-SA4-NS4), we first solve the proposed model with ``(HS0, HS1, or HS2)-SA4-NS4'' and save their appointment schedule presented by $w_{it_i}$ variables. Then, we fix the $w_{it_i}$ variables and resolve the model with the reference setting of ``HS3-SA4-NS4'' to assess (or equivalently simulate) the solutions obtained in ``(HS0, HS1, or HS2)-SA4-NS4''.

Our results indicate that as we move from HS0 to more detailed levels of HS (HS1, HS2, and HS3), there is a significant decrease in the total cost, waiting cost, and overtime cost for a given instance. This trend highlights the substantial benefits of incorporating a comprehensive range of health related factors into the scheduling model. By integrating more detailed and patient-specific health information, the model benefits from more accurate probability distributions for service times and subsequently generates more efficient schedules.

When comparing HS0 with HS1, and HS2 with HS3, we notice only marginal improvements in reducing total costs. This suggests that while advancing from a moderate to a more detailed understanding of patient health states does contribute to efficiency, the gains are relatively slight. Given the costs associated with collecting and integrating more detailed health state data, stakeholders must carefully consider whether the slight improvements justify the additional efforts and used resources. This is especially pertinent in settings where resources are limited, and the incremental cost savings might not offset the investment required for gathering comprehensive health data.

\subsubsection{Analysis of Stochastic Arrivals (SA)} \label{subsection 4.3}
\noindent 
In Table \ref{tab:SAsimulations}, we report the results of computational experiments performed to assess the impact of different levels of Stochastic Arrivals (SA) on the efficiency of the appointment scheduling problem. These findings are key in addressing Question 3, as outlined at the beginning of Section 
\ref{AS:CR}. This analysis is instrumental in understanding the extent to which stochastic patient arrival times impact the overall effectiveness and optimization of healthcare scheduling and provide valuable insights for enhancing operational efficiency in clinical settings.

In this table, for all instances, we have considered the highest levels of Health State (HS3) and No-Shows (NS4) for all patients. For instances, we have reported the results obtained from the proposed model in ``HS3-SA4-NS4'' setting directly in the table. However, the solutions of instances with other stochastic arrival settings (i.e., SA0, SA1, SA2, and SA3) must be evaluated in the reference setting of ``HS3-SA4-NS4'', as discussed in the previous subsection. This is because we aim at assessing the importance of including different levels of patients' stochastic arrivals in the effectiveness of appointment scheduling. 

The outcomes show that SA4, which combines both patient-and-time-dependent unpunctuality factors, significantly outperforms other stochastic arrival settings. This indicates that, by integrating the highest level of detail in unpunctuality, the model allocates appointment slots more effectively that leads to reduced costs and enhanced operational efficiency. Settings SA2 and SA3 benefit from modeling two different aspects of uncertainty in patients' arrival times, and therefore none of them is expected to necessarily dominate the other one in all instances. However, interestingly, our results indicate that in most instances SA2 dominates SA3 in terms of the total cost. This observation is reasonable considering that typical traffic patterns are expected to be the main factor affecting the punctuality of patients. However, it is conceivable that patient demographics might emerge as a significant factor influencing scheduling outcomes on a case-by-case basis. Also, our results indicate that SA2 and SA3 outperform SA1, which is a simplified version of both settings considering only one probability distribution for all patients and all time slots. Finally, as expected, we observe SA1 is superior to SA0 that assumes that all patients are punctual. 

\begin{landscape}
\begin{table}

\caption{Computational results for analysis of health states.}
\label{tab:HSsimulations}
{
\fontsize{8}{10}\selectfont
 \renewcommand{\arraystretch}{1.3} 

}

\end{table}
{Notes: NP: No. of patients, CT: Clinic Time, CR: Cost Ratio.}
\end{landscape}
\renewcommand{\arraystretch}{1} 

\begin{landscape}
\begin{table}
\caption{Computational results for analysis of stochastic arrivals.}
\label{tab:SAsimulations}
{
\fontsize{7.3}{10}\selectfont
 \renewcommand{\arraystretch}{1.3} 
    %
}

\end{table}
{Notes: NP: No. of patients, CT: Clinic Time, CR: Cost Ratio.}
\end{landscape}
\renewcommand{\arraystretch}{1} 

\subsubsection{Analysis of No-Shows (NS)}
\noindent 
In Table \ref{tab:NSsimulations}, we have reported the computational results for assessing the impact of different levels of no-show (NS) on the efficiency of appointment scheduling problem. These findings contribute to addressing research Question 4 as outlined at the beginning of Section \ref{AS:CR}.

In this table, for all instances, we have considered the highest levels of Health State (HS3) and Stochastic Arrivals (SA4) for all patients. As discussed in subsections \ref{subsection 4.2} and \ref{subsection 4.3}, we have evaluated the solutions of instances with NS0, NS1, NS2, and NS3 in the reference setting of ``HS3-SA4-NS4''. Also, the results of NS4 are directly reported as obtained from the proposed model. 

The results indicate that NS4 significantly outperforms other no-show settings thanks to combining patient-and-time-dependent no-show factors. This outcome suggests that more detailed no-show information enables clinics to schedule appointments more effectively thus maximizing resource utilization. Additionally, similar to the SA analysis, we observe NS2 results in lower total cost than NS3 implying that time dependency plays a more critical role than patient dependency in no-shows. This analysis highlights that while our conclusions are generally applicable, variations might occur on a case-by-case basis, where patient demographics could emerge as a more critical factor in no-shows. 

Notably, NS2 and NS3 demonstrate superior performance compared to NS1, which can be attributed to NS1's lack of consideration for both time-dependent and patient-dependent no-show probabilities. Furthermore, NS1 outperforms NS0 which shows the importance of acknowledging no-shows in the scheduling process.

\subsubsection{Analysis of reminder systems}
\noindent 
In this section, we analyze the impacts of effective communication with patients on the efficiency of the clinic's scheduling system particularly in terms of managing no-shows. We investigate whether sending reminders to patients to inform the clinic about their potential no-shows is economically viable and how it influences overall clinic efficiency. To conduct a thorough investigation, we assess various levels of ${\pi }^{info}$ that represents the likelihood of patients revealing their no-shows in response to reminders. We perform this analysis for ${\pi }^{info}\mathrm{\in }\mathrm{\{}\mathrm{0,\ 0.2,\ 0.4,\ 0.6,\ 0.8}\mathrm{,1}\}$ indicating different scenarios from ``no response to reminders'' to ``absolute certainty in patient communication'' regarding no-shows. By examining this spectrum, we aim to understand the effect of different degrees of patient engagement and communication effectiveness on the scheduling process. The primary aim is to address Research Question 5 outlined at the start of Section \ref{AS:CR} which focus on how different extents of patient responsiveness to reminders influence the overall effectiveness of the scheduling strategy.

We analyze instances with 10, 12, and 14 patients to ensure that our findings are applicable across clinics with different sizes. In all these instances, we consider the highest level of information available for HS, SA, and NS to carry out our analysis. The findings presented in Table \ref{tab:reminder} shed light on the potential enhancements in scheduling efficiency that can be realized through the implementation of successful patient communication strategies.

The results show a 23\% reduction in the average total cost as the probability of patients informing the clinic about potential no-shows (${\pi }^{info}$) increases from 0 to 1. This improvement is achieved mainly thanks to substantial decreases in total waiting and idle costs. Our results show that sending reminders to patients to inform the clinic about their potential no-shows is a highly effective strategy in enhancing the overall efficiency of the healthcare system. Our study also reveals that reminder systems results in more significant savings in total, waiting, and idle costs in clinics with higher cost ratios. The results also indicate that our model is computationally effective in providing practitioners with the opportunity to assess the economic feasibility of establishing different reminder systems such as SMS, emails, and phone calls. 

\subsubsection{Analysis of enhanced show-up rate}
\noindent 
In this section, we analyze the effect of increasing patients' show-up rates in the efficiency of the appointment scheduling systems. The motivation behind this analysis is that clinics can increase patients' show-up rates through incentivization and penalty mechanisms. Thus, we introduce a new parameter $\alpha $ that represents the adjustment in no-show rates as in ${\pi }_{it}:= \left(1-\alpha \right){\pi }_{it}$. The values of $\alpha >0$ reflect having enhanced show-up rates, while $\alpha <0$ represents a decreased likelihood of patient attendance, possibly due to negative past experiences. In our analysis, we consider different values of $\alpha \in \{-0.2,-0.1,\ 0,\ 0.1,0.2\}$. Our approach involves comparing the results of the scheduling model with the ``HS3-SA4-NS4'' setting before and after adjusting the no-show rates. We have provided the results in Table \ref{tab:Show-up}. These results are crucial as they provide a deep understanding of the implications of adjusting patient show-up probabilities. 

The extreme case of enhanced show up, $\alpha =0.2$, results in around 10\% improvement in the average total cost compared to the reference setting of $\alpha =0$. This indicates that, by employing strategies such as incentivization, clinics can better utilize their resources, reduce idle times, and minimize the waiting time for patients. On the other hand, the degraded show up rates in the case of $\alpha =-0.2$ results in an average deterioration of 13.58\%. This observation highlights the criticality of enhancing patient experience through multifaceted approaches, such as cultivating a congenial environment within healthcare settings. By implementing strategies like incentivization, clinics can optimize resource utilization, diminish idle periods, and curtail patient waiting times and thus elevate the overall efficiency and satisfaction in healthcare delivery.

\begin{landscape}
\begin{table}
\caption{Computational results for analysis of no-shows.}
\label{tab:NSsimulations}
{
\fontsize{7.1}{10}\selectfont
 \renewcommand{\arraystretch}{1.3} 

}

\end{table}
{Notes: NP: No. of patients, CT: Clinic Time, CR: Cost Ratio.}
\end{landscape}

\renewcommand{\arraystretch}{1} 

\begin{landscape}
\begin{table}
\caption{Computational results for analysis of reminder systems.}
\label{tab:reminder}
{
\fontsize{6.7}{10}\selectfont
 \renewcommand{\arraystretch}{1.3} 
    %
%
}

\end{table}
{Notes: NP: No. of patients, CT: Clinic Time, CR: Cost Ratio.}
\end{landscape}

\renewcommand{\arraystretch}{1} 

\begin{landscape}
\begin{table}
\caption{Computational results for show-up adjustment coefficient.}
\label{tab:Show-up}
{
\fontsize{7.3}{10}\selectfont
 \renewcommand{\arraystretch}{1.3} 
    %
%
}

\end{table}
{Notes: NP: No. of patients, CT: Clinic Time, CR: Cost Ratio.}
\end{landscape}
\renewcommand{\arraystretch}{1} 



\subsubsection{Sensitivity Analysis}
\label{subsec:sensitivity_analysis}

\noindent
In this subsection, we evaluate the robustness of the proposed model with respect to estimation errors in the main uncertainty inputs. We perturb average service times, average arrival deviations, and no show probabilities by 5\%, 10\%, and 15\%. The resulting schedules are evaluated under the most detailed setting with HS3, SA4, and NS4.

Table~\ref{service_time_sensitivity} reports the results for service time perturbations. The proposed model remains stable under moderate changes in average service times. The average total cost changes by only 0.29\% and 0.91\% under 5\% and 10\% perturbations, respectively. Even with a 15\% perturbation, the average total cost increases by 5.09\%. The largest changes occur in waiting cost, while overtime cost remains limited in most cases. These results indicate that the proposed model is robust to moderate errors in estimating service time distributions.

Table~\ref{arrival_times_sensitivity} presents the results for arrival time perturbations. The schedules show a high level of robustness to errors in estimating average arrival deviations. The average total cost changes by only 0.06\%, 0.61\%, and 0.32\% under 5\%, 10\%, and 15\% perturbations, respectively. Across all settings, the effect on overtime cost is negligible. Most of the variation is reflected in waiting and idle costs. This suggests that moderate estimation errors in patient arrival behavior have a limited impact on schedule performance.

\sk{Table~\ref{no_show_sensitivity} reports the results for no show probability perturbations. The average total cost decreases by 0.75\%, 0.67\%, and 0.56\% under 5\%, 10\%, and 15\% perturbations, respectively. This reduction is mainly driven by lower waiting costs, especially when the cost ratio is high. At the same time, idle cost may increase because fewer patients attend their appointments. These results show that the model remains stable under moderate errors in no show probability estimates. They also highlight the operational tradeoff between reducing patient waiting and maintaining provider utilization.}

\begin{table}[htp]
\caption{Service times sensitivity analysis.}
\label{service_time_sensitivity}
\resizebox{\textwidth}{!}{%
\begin{tabular}{lllllll}
\hline
$\Delta$ in average durations & Clinic time & Cost ratio & $\Delta_{\text{Idle Cost}}$ (\%) & $\Delta_{\text{Waiting Cost}}$ (\%) & $\Delta_{\text{Overtime Cost}}$ (\%) & $\Delta_{\text{Total Cost}}$ (\%) \\ \hline

5\%    & 150 & 1           & -0.17                    & 0.53                         & 0.30                  & 0.66               \\
        &                     & 5           & -0.08                    & 0.24                         & 0.20                  & 0.36               \\
        &                     & 10          & -0.02                    & 0.15                         & 0.25                  & 0.38               \\
        & 210 & 1           & -0.04                    & 0.25                         & 0.00                  & 0.21               \\
        &                      & 5           & 0.26                     & -0.20                        & 0.00                  & 0.07               \\
        &                      & 10          & 0.08                     & 0.05                         & 0.01                  & 0.14               \\
        & 270 & 1           & 0.05                     & 0.28                         & 0.00                  & 0.34               \\
        &                      & 5           & -0.05                    & 0.46                         & 0.00                  & 0.41               \\
        &                      & 10          & -0.06                    & 0.12                         & 0.00                  & 0.06               \\ \hline
     Average & & & 0.00                     & 0.21                         & 0.09                  & 0.29               \\ \hline
10\%     & 150 & 1           & 0.28                     & 0.40                         & 0.12                  & 0.80               \\
        &                      & 5           & -0.02                    & 0.69                         & 0.17                  & 0.84               \\
        &                      & 10          & 0.26                     & 0.32                         & 0.57                  & 1.15               \\
        & 210 & 1           & 0.50                     & 0.80                         & 0.00                  & 1.30               \\
        &                      & 5           & 0.71                     & -0.13                        & 0.00                  & 0.58               \\
        &                      & 10          & 0.66                     & 0.05                         & 0.00                  & 0.71               \\
        & 270 & 1           & 0.76                     & 0.82                         & 0.00                  & 1.58               \\
        &                      & 5           & 1.33                     & -1.07                        & 0.00                  & 0.26               \\
        &                      & 10          & 1.10                     & -0.12                        & 0.00                  & 0.98               \\ \hline
 Average & &  & 0.62                     & 0.20                         & 0.10                  & 0.91               \\ \hline
15\%     & 150 & 1           & -1.59                    & 8.53                         & 0.63                  & 7.56               \\
        &                      & 5           & -1.15                    & 4.36                         & 1.62                  & 4.82               \\
        &                      & 10          & -0.73                    & 3.13                         & 2.69                  & 5.11               \\
        & 210 & 1           & -0.24                    & 6.79                         & 0.02                  & 6.57               \\
        &                      & 5           & -0.79                    & 4.64                         & 0.04                  & 3.89               \\
        &                      & 10          & 0.09                     & 3.59                         & 0.07                  & 3.75               \\
        & 270 & 1           & -1.44                    & 8.19                         & 0.00                  & 6.74               \\
        &                      & 5           & -0.37                    & 3.74                         & 0.00                  & 3.36               \\
        &                      & 10          & -0.74                    & 4.72                         & 0.00                  & 3.99               \\ \hline
      Average & & & -0.78                    & 5.30                         & 0.56                  & 5.09  \\   \hline         
\end{tabular}

}

\end{table}

\begin{table}[htp]
\caption{Arrival times sensitivity analysis.}
\label{arrival_times_sensitivity}
\resizebox{\textwidth}{!}{%
\begin{tabular}{lllllll}
\hline
$\Delta$ in average arrivals & Clinic time & Cost ratio & $\Delta_{\text{Idle Cost}}$ (\%) & $\Delta_{\text{Waiting Cost}}$ (\%) & $\Delta_{\text{Overtime Cost}}$ (\%) & $\Delta_{\text{Total Cost}}$ (\%) \\ \hline

5\%    & 150 & 1           & -0.01                    & 0.09                         & 0.00                  & 0.08               \\
        &                     & 5           & 0.01                     & 0.04                         & 0.01                  & 0.07               \\
        &                     & 10          & -0.02                    & 0.00                         & 0.00                  & -0.02              \\
        & 210 & 1           & -0.02                    & 0.07                         & 0.00                  & 0.05               \\
        &                      & 5           & 0.06                     & 0.05                         & 0.00                  & 0.11               \\
        &                      & 10          & 0.05                     & 0.02                         & 0.00                  & 0.07               \\
        & 270 & 1           & -0.01                    & 0.16                         & 0.00                  & 0.15               \\
        &                      & 5           & 0.01                     & 0.02                         & 0.00                  & 0.03               \\
        &                      & 10          & 0.02                     & 0.01                         & 0.00                  & 0.03               \\ \hline
     Average & & & 0.01                     & 0.05                         & 0.00                  & 0.06               \\ \hline
10\%     & 150 & 1           & 0.45                     & 0.32                         & 0.28                  & 1.06               \\
        &                      & 5           & 0.32                     & 0.05                         & 0.06                  & 0.43               \\
        &                      & 10          & 0.33                     & 0.05                         & 0.04                  & 0.42               \\
        & 210 & 1           & 0.55                     & 0.26                         & 0.00                  & 0.82               \\
        &                      & 5           & 0.40                     & 0.05                         & 0.00                  & 0.45               \\
        &                      & 10          & 0.45                     & 0.07                         & 0.00                  & 0.52               \\
        & 270 & 1           & 0.60                     & 0.28                         & 0.00                  & 0.88               \\
        &                      & 5           & 0.40                     & 0.03                         & 0.00                  & 0.44               \\
        &                      & 10          & 0.44                     & 0.03                         & 0.00                  & 0.47               \\ \hline
 Average & &  & 0.44                     & 0.13                         & 0.04                  & 0.61               \\ \hline
15\%     & 150 & 1           & 0.13                     & 0.34                         & 0.05                  & 0.52               \\
        &                      & 5           & 0.18                     & 0.10                         & 0.06                  & 0.34               \\
        &                      & 10          & 0.13                     & 0.05                         & 0.01                  & 0.19               \\
        & 210 & 1           & 0.19                     & 0.35                         & 0.00                  & 0.55               \\
        &                      & 5           & 0.14                     & 0.11                         & 0.00                  & 0.25               \\
        &                      & 10          & 0.08                     & 0.03                         & 0.00                  & 0.11               \\
        & 270 & 1           & 0.19                     & 0.31                         & 0.00                  & 0.49               \\
        &                      & 5           & 0.15                     & 0.09                         & 0.00                  & 0.24               \\
        &                      & 10          & 0.18                     & 0.04                         & 0.00                  & 0.23               \\ \hline
      Average & & & 0.15                    & 0.16                         & 0.01                  & 0.32  \\   \hline         
\end{tabular}
}

\end{table}

\begin{table}[htp]
\caption{No-show probability sensitivity analysis.}
\label{no_show_sensitivity}
\resizebox{\textwidth}{!}{%
\begin{tabular}{lllllll}
\hline
$\Delta$ in no-show probability & Clinic time & Cost ratio & $\Delta_{\text{Idle Cost}}$ (\%) & $\Delta_{\text{Waiting Cost}}$ (\%) & $\Delta_{\text{Overtime Cost}}$ (\%) & $\Delta_{\text{Total Cost}}$ (\%) \\ \hline
5\%      & 150  & 1 & -0.06 & 0.09 & 0.03 & 0.06 \\
         &      & 5 & -0.09 & 0.07 & 0.06 & 0.04 \\
         &      & 10 & -0.46 & -0.84 & -1.21 & -2.51 \\
         & 210  & 1 & -0.08 & 0.04 & 0.00 & -0.04 \\
         &      & 5 & -0.08 & 0.02 & 0.00 & -0.06 \\
         &      & 10 & 0.06 & -2.12 & -0.02 & -2.08 \\
         & 270  & 1 & -0.03 & -0.03 & 0.00 & -0.06 \\
         &      & 5 & -0.01 & -0.02 & 0.00 & -0.03 \\
         &      & 10 & -0.25 & -1.82 & 0.00 & -2.07 \\ \hline
Average  &  &  & -0.11 & -0.51 & -0.13 & -0.75 \\ \hline
10\%     & 150  & 1 & 0.15 & -0.06 & 0.03 & 0.12 \\
         &      & 5 & 0.17 & -0.13 & 0.01 & 0.05 \\
         &      & 10 & -0.34 & -0.96 & -1.23 & -2.53 \\
         & 210  & 1 & 0.01 & 0.05 & 0.00 & 0.06 \\
         &      & 5 & 0.11 & -0.10 & 0.00 & 0.01 \\
         &      & 10 & 0.29 & -2.23 & -0.02 & -1.96 \\
         & 270  & 1 & -0.06 & 0.07 & 0.00 & 0.00 \\
         &      & 5 & 0.01 & 0.02 & 0.00 & 0.02 \\
         &      & 10 & -0.08 & -1.77 & 0.00 & -1.84 \\ \hline
Average  &  &  & 0.03 & -0.57 & -0.13 & -0.67 \\ \hline
15\%     & 150  & 1 & 0.48 & -0.15 & 0.02 & 0.35 \\
         &      & 5 & 0.51 & -0.30 & 0.01 & 0.22 \\
         &      & 10 & 0.13 & -1.22 & -1.34 & -2.44 \\
         & 210  & 1 & 0.33 & -0.13 & 0.00 & 0.20 \\
         &      & 5 & 0.26 & -0.20 & 0.00 & 0.06 \\
         &      & 10 & 0.49 & -2.31 & -0.02 & -1.84 \\
         & 270  & 1 & 0.25 & -0.13 & 0.00 & 0.13 \\
         &      & 5 & 0.40 & -0.24 & 0.00 & 0.16 \\
         &      & 10 & -0.06 & -1.83 & 0.00 & -1.88 \\ \hline
Average  &  &  & 0.31 & -0.72 & -0.15 & -0.56 \\ \hline
\end{tabular}
}
\end{table}

\section{Conclusion}\label{AS:CC}
\noindent Our research introduces a novel approach to address the pressing challenges of healthcare appointment scheduling through an efficient stochastic model. \hll{Although our exposition and experiments focus on single-provider outpatient consultation sessions, such as primary care and specialty clinic visits, the proposed framework extends to any appointment-based service in which a single server sees scheduled customers under uncertain durations, punctuality, and attendance.} The model adeptly navigates the complexities in patient scheduling by considering the intrinsic uncertainties tied to service times, patient unpunctuality, and no-shows. In addition, these uncertainties are characterized by their patient-and-time-dependent distributions which are a critical aspect that previous models have often oversimplified due to the modeling and solution complexity. We showed that our model is efficient in solving large-scale instances optimally within a reasonable computational time. The proposed model captured an exponential number of scenarios while maintaining a polynomial number of variables and constraints for the first time in this field. Our novel integration of uncertain variables at an individualized time-dependent level has revealed significant operational efficiencies, demonstrated by a notable reduction in total clinic costs. Specifically, we observed a comprehensive cost reduction of 34\% attributed to the consideration of patient-dependent service times. Furthermore, the model's sensitivity to patient-and-time-dependent unpunctuality and no-show probabilities led to additional cost reductions of 12\% and 67\%, respectively. These findings highlight the critical importance of embracing the complexity of patient behavior patterns in scheduling models to optimize healthcare delivery and operational efficacy. Our findings highlight the vital importance of an optimized resource allocation, which maintains system efficiency without overburdening healthcare providers or compromising patient satisfaction.

We introduced personalized reminders as a strategic approach to reduce no-shows within our model framework that showed the profound impact of personalized communication in mitigating no-show rates. This consideration resulted in a further 23\% reduction in total costs. The obtained strategic insight highlights the transformative potential of using data-driven, patient-centric communication strategies to enhance clinic efficiency and patient engagement.

Our model not only challenges the conventional paradigms of healthcare scheduling but also sets a new benchmark for integrating patient-specific data to drive operational improvements. The detailed understanding and application of stochastic variables, reflective of real-world patient behaviors, enhance the model's practical relevance and applicability across diverse healthcare settings. By considering a wider array of factors, our model proved how a patient-and-time-dependent appointment scheduling approach can significantly enhance healthcare service efficiency and result in considerable cost savings and improved patient satisfaction.

\vspace{10pt}

\section*{\small Data availability}

\vspace{-5pt}

\noindent Data used in this research are available upon request.

\section*{\small Conflict of Interest}

\vspace{-5pt}

\noindent The authors declare that there is no conflict of interest.



\bibliographystyle{pomsref}

 \let\oldbibliography\thebibliography
 \renewcommand{\thebibliography}[1]{%
    \oldbibliography{#1}%
    \baselineskip14pt 
    \setlength{\itemsep}{10pt}
 }
\bibliography{ref1}




%
%
%

\newpage

\date{ }

\vspace{0 pt}

\numberwithin{equation}{section}
\renewcommand\theequation{S.\arabic{equation}}

\maketitle

\vspace*{20pt}

\begin{center}
    \Large
    Supplementary material for "Stochastic appointment scheduling with patient-and-time-dependent probability distributions"
\end{center} 

\vspace{20 pt}

\setcounter{page}{1}
\setcounter{section}{0}
\section{Proof of Theorem 1}

\noindent To establish the validity of Model (M1), we need to confirm the accuracy of the following statements:

\begin{enumerate}
\item  Variables $w_{it_i}$ and $z_{if_it_{i+1}}$ are well-defined by the proposed model.

\item  Objective function (1) computes the expected total cost correctly.
\end{enumerate}

\noindent \textbf{Proof of Statement 1:} Constraints (2) and (6) ensure the well-definition of the variables $w_{it_i}$. The proof of consistency between the values of variables $z_{if_it_{i+1}}$ and their formal definitions is established through an induction method. We first study the case $i\mathrm{=1}$ as the basis of induction. In this case, constraint (3) results in $z_{000}\mathrm{=}w_{\mathrm{1}0}=1$ that coincides with the definition of $z_{000}$ as we expect the appointment of dummy patient 0 to finish at 0 and the next patient $i=1$ to be scheduled at the beginning of the scheduling horizon. We note that we have $w_{\mathrm{1}0}=1$ based on constraint (2). 

\noindent As the inductive step, we suppose that for a fixed $i\mathrm{-}\mathrm{1}$, the model has computed all variables $z_{(i-1)f_{i-1}t_i}\ \forall f_{i-1}\in F_{i-1},t_i\in T_i$ properly. In the following, supposing that ${\tilde{y}}_i$ is a random variable representing the finish time of appointment for patient $i,\ $we first show that all variables $z_{if_it_{i+1}}\ \forall f_i\in F_i\backslash \{f^*\},t_{i+1}\in T_{i+1}$ are well-defined and then we discuss how it can be justified for $f_i=f^*,t_{i+1}\in T_{i+1}=\{.\}$

\begin{align}
z_{if_it_{i+1}} &= \mathrm{Pr}\left(\left(w_{(i+1)t_{(i+1)}}=1\right) \wedge \left({\tilde{y}}_i=f_i\right)\right) \nonumber \\
&= \mathrm{Pr}\left(w_{(i+1)t_{(i+1)}}=1\right)\mathrm{Pr}\left({\tilde{y}}_i=f_i\right) \nonumber \\
&= w_{(i+1)t_{(i+1)}}\mathrm{Pr}\left({\tilde{y}}_i=f_i\right) \nonumber \\
&= w_{(i+1)t_{(i+1)}}\sum_{t_i\in T_i} \mathrm{Pr}\left(w_{it_i}=1\right)\mathrm{Pr}\left({\tilde{y}}_i=f_i\mid w_{it_i}=1\right). \label{Aeq.1}
\end{align}

In the recent equations, we used the assumption that the two events $\left(w_{\mathrm{(}i\mathrm{+1)}t_{(i+1)}}\mathrm{=1}\right)$ and $\left({\tilde{y}}_i\mathrm{=}f_i\right)$ are independent. This assumption holds because $w_{\mathrm{(}i\mathrm{+1)}t_{(i+1)}}$ is not subject to uncertainty and is determined by the decision maker's choice. According to the latest formula, if $w_{\mathrm{(}i\mathrm{+1)}t_{(i+1)}}\mathrm{=0}$ then it follows that $z_{if_it_{i+1}}\mathrm{=0}$ which aligns with constraint (3). Consequently, we can exclude $w_{\mathrm{(}i\mathrm{+1)}t_{(i+1)}}$ from the final expression and proceed as below:
\begin{align}
z_{if_it_{i+1}} &= \sum_{t_i\in T_i} \mathrm{Pr}\left(w_{it_i}=1\right)\mathrm{Pr}\left(\tilde{y}_i=f_i \mid w_{it_i}=1\right)  \label{Aeq1.1}\\
&= \sum_{t_i\in T_i} w_{it_i} \left[\sum_{f_{i-1}\in F_{i-1}} \mathrm{Pr}\left(\tilde{y}_{(i-1)}=f_{(i-1)}\right)\mathrm{Pr}\left(\tilde{y}_i=f_i \mid \left(w_{it_i}=1\right) \wedge \left(\tilde{y}_{(i-1)}=f_{(i-1)}\right)\right)\right] \label{Aeq1.2}\\
&= \sum_{t_i\in T_i} \left[\sum_{f_{i-1}\in F_{i-1}} w_{it_i}\mathrm{Pr}\left(\tilde{y}_{(i-1)}=f_{(i-1)}\right)\mathrm{Pr}\left(\tilde{y}_i=f_i \mid \left(w_{it_i}=1\right) \wedge \left(\tilde{y}_{(i-1)}=f_{(i-1)}\right)\right)\right] \label{Aeq1.3}\\
&= \sum_{t_i\in T_i} \left[\sum_{f_{i-1}\in F_{i-1}} w_{it_i}\mathrm{Pr}\left(\tilde{y}_{(i-1)}=f_{(i-1)}\right)\mathrm{Pr}\left(\max(t_i,f_{i-1})+\tilde{d}_i=f_i\right)\right] \label{Aeq1.4}\\
&= \sum_{t_i\in T_i} \left[\sum_{f_{i-1}\in F_{i-1}} \mathrm{Pr}\left(w_{it_i}=1\right)\mathrm{Pr}\left(\tilde{y}_{(i-1)}=f_{(i-1)}\right)\mathrm{Pr}\left(\max(t_i,f_{i-1})+\tilde{d}_i=f_i\right)\right] \label{Aeq1.5}\\
&= \sum_{t_i\in T_i} \left[\sum_{f_{i-1}\in F_{i-1}} \mathrm{Pr}\left(\left(w_{it_i}=1\right) \wedge \left(\tilde{y}_{(i-1)}=f_{(i-1)}\right)\right)\mathrm{Pr}\left(\max(t_i,f_{i-1})+\tilde{d}_i=f_i\right)\right] \label{Aeq1.6}\\
&= \sum_{t_i\in T_i} \left[\sum_{f_{i-1}\in F_{i-1}} z_{(i-1)f_{i-1}t_i} \mathrm{Pr}\left(\max(t_i,f_{i-1})+\tilde{d}_i=f_i\right)\right] \label{Aeq1.7}\\
&= \sum_{t_i\in T_i} \sum_{f_{i-1}\in F_{i-1}} z_{(i-1)f_{i-1}t_i} \mathrm{Pr}\left(\max(t_i,f_{i-1})+\tilde{d}_i=f_i\right) \label{Aeq1.8}
\end{align}

Also, relation (\ref{Aeq1.5}) is valid as $w_{it_i}\mathrm{=Pr}\mathrm{}\left(w_{it_i}\mathrm{=1}\right)$ holds, given that $w_{it_i}$ is a binary variable determined by the decision maker and lacks any uncertain nature. Relation (\ref{Aeq1.6}) is valid because the events $w_{it_i}\mathrm{=1}$ and $\mathrm{Pr}\mathrm{}\left({\tilde{y}}_{\left(i-1\right)}\mathrm{=}f_{\left(i-1\right)}\right)$ are independent. Finally, the validity of relation (\ref{Aeq1.7}) aligns with the definition of $z_{(i-1)f_{i-1}t_i}$ (the basis of induction), ensuring that constraints (3)-(4) correctly define variables $z_{if_it_{i+1}}\ $ for$\ f_i\in F_i\backslash \{f^*\},t_{i+1}\in T_{i+1}$. 
\\

\noindent \textbf{Proof of Statement 2: }To verify that the objective function is accurately computed, we focus on the representation of random cost variables for patient  $i$ including ${\widetilde{\mathrm{c}}}^{waiting}_i$, ${\widetilde{\mathrm{c}}}^{idle}_i$, and ${\widetilde{\mathrm{c}}}^{overtime}_i$. These variables are contingent upon the scheduled appointment times and the uncertain parameters of service time.
\\
\begin{align}
\noindent \text{Total expected cost} &= \sum_{i \in I_0} E\left(\tilde{c}^{waiting}_i + \tilde{c}^{idle}_i + \tilde{c}^{overtime}_i \right) \nonumber \\
&= \sum_{i \in I_0} \left(E\left[\tilde{c}^{waiting}_i\right] + E\left[\tilde{c}^{idle}_i\right] + E\left[\tilde{c}^{overtime}_i\right]\right) \nonumber \\
&= \sum_{i \in I_0} \sum_{f_i \in F_i} \sum_{t_{i+1} \in T_{i+1}} \Pr\left(\left(\tilde{y}_i = f_i\right) \wedge \left(w_{(i+1)(t_{i+1})} = 1\right)\right) \nonumber \\
&\quad \times \left(E\left[\tilde{c}^{waiting}_i \mid \left(\tilde{y}_i = f_i\right) \wedge \left(w_{(i+1)(t_{i+1})} = 1\right)\right] + \right. \nonumber \\
&\quad \left. E\left[\tilde{c}^{idle}_i \mid \left(\tilde{y}_i = f_i\right) \wedge \left(w_{(i+1)(t_{i+1})} = 1\right)\right] + \right. \nonumber \\
&\quad \left. E\left[\tilde{c}^{overtime}_i \mid \left(\tilde{y}_i = f_i\right) \wedge \left(w_{(i+1)(t_{i+1})} = 1\right)\right]  \right) \nonumber \\
&= \sum_{i \in I_0} \sum_{f_i \in F_i} \sum_{t_{i+1} \in T_{i+1}} z_{if_it_{i+1}} \nonumber \\
&\quad \times \left(c^p_{if_it_{i+1}} + c^d_{if_it_{i+1}} + c^o_{if_it_{i+1}} \right) 
\end{align}

\noindent Therefore, Statement 2 holds too and Theorem 1 is valid.

\section{Appointment Scheduling Example} \label{chap:Example}

\noindent He we illustrate our model through a numerical example. This example involves two patients both of whom have already been assigned their appointment slots. Thus, Constraint (2) regarding single appointment assignments per patient is satisfied and we have $w_{\mathrm{10}}\mathrm{=1,\ }w_{\mathrm{2(30)}}\mathrm{=1}$. The service times for Patient 1 are (20, 40) minutes with corresponding probabilities of (0.5, 0.5), and for Patient 2 we have service times of (20, 30) minutes and corresponding probabilities of (0.5, 0.5). The model captures the probability of different finish times for each patient considering their respective service time distributions. This information is presented in Table \ref{exampletable} which outlines the probability of each patient finishing their appointment at various times.

\begin{table}[htbp]
\centering
\caption{Probability of finish times for each patient}
\label{exampletable}
\begin{tabular}{|c|c|c|c|} 
\hline 
Line Number & Patient & Finish time & Probability \\ 
\hline 
1 & 0 & 0 & 1 \\ 
\hline 
2 & 1 & 20 & 0.5 \\ 
\cline{1-1} \cline{3-4}
3 &  & 40 & 0.5 \\ 
\hline 
4 & 2 & 50 & 0.25 \\ 
\cline{1-1} \cline{3-4}
5 &  & 60 & 0.5 \\ 
\cline{1-1} \cline{3-4}
6 &  & 70 & 0.25 \\ 
\hline 
\end{tabular}
\end{table}

\noindent First, we schedule Patient 1 in the first appointment slot. Following Constraint (3) we derive the following set of constraints ($\theta =10$):

\begin{align}
z_{000} &= 1 \label{B.1} \\
z_{00(t_1)} &= 0 \quad \text{for } t_1 \in T_1\backslash \{0\} \label{B.2}
\end{align}

\noindent which validates the first line of Table 1.

\noindent Considering Constraint (4), we also derive the following constraint:
\begin{align}
z_{000} \Pr(\max(0,0) + \tilde{d}_1 = 20) &= z_{1(20)(20)} + z_{1(20)(30)} + z_{1(20)(40)} + z_{1(20)(50)} \nonumber \\
&\quad + z_{1(20)(60)} + z_{1(20)(70)} + z_{1(20)(80)} + z_{1(20)(90)}.
\label{B.3}
\end{align}

\noindent Upon simplification, we obtain:
\begin{align}
    0.5 &= z_{1(20)(20)} + z_{1(20)(30)} + z_{1(20)(40)} + z_{1(20)(50)} \nonumber \\
    &\quad + z_{1(20)(60)} + z_{1(20)(70)} + z_{1(20)(80)} + z_{1(20)(90)} \label{B.4}
\end{align}

\noindent From Constraint (3) for Patient 2, we have:
\begin{align}
    \sum_{f_1 \in F_1} z_{1f_1(t_1)} &= 0 \quad \text{for } t_1 \in T_1 \setminus \{30\} \label{B.5}
\end{align}

\noindent Thus, from (\ref{B.5}) and (\ref{B.4}) we have:
\begin{align}
    0.5 &= z_{1(20)(30)} \label{B.6}
\end{align}
which validates the second line of Table 1.

\noindent Considering Constraint (4) for Patient 1 and $f = 40$, we have:
\begin{align}
z_{000} \Pr(\max(0,0) + \tilde{d}_1 = 40) &= z_{1(40)(20)} + z_{1(40)(30)} + z_{1(40)(40)} + z_{1(40)(50)} \nonumber \\
&\quad + z_{1(40)(60)} + z_{1(40)(70)} + z_{1(40)(80)} + z_{1(40)(90)}.
\label{B.7}
\end{align}

\noindent Upon simplification, we obtain:
\begin{align}
    0.5 &= z_{1(40)(20)} + z_{1(40)(30)} + z_{1(40)(40)} + z_{1(40)(50)} \nonumber \\
    &\quad + z_{1(40)(60)} + z_{1(40)(70)} + z_{1(40)(80)} + z_{1(40)(90)}. \label{B.8}
\end{align}
Also, by considering Constraint (3) for Patient 2, we have $\sum_{f_1 \in F_1} z_{1f_1(t_1)} = 0$ for $t_1 \in T_1 \setminus \{30\}$. Now, based on (\ref{B.5}) and (\ref{B.8}), we can show that:
\begin{align}
    0.5 &= z_{1(40)(30)} \label{B.9}
\end{align}
This validates the third line of Table \ref{exampletable}. Furthermore, considering Constraint (4) for Patient 2 and $f=50$, we derive:
\begin{align}
    & z_{1(20)(30)} \Pr(\max(30,20) + \tilde{d}_2 = 50) + \sum_{t_i \in T_i \setminus \{30\}} z_{1(20)(t_i)} \Pr(\max(t_i,20) + \tilde{d}_2 = 50) \nonumber \\
    & + \sum_{t_i \in T_i} z_{1(40)(t_i)} \Pr(\max(t_i,40) + \tilde{d}_2 = 50) = z_{2(50)(\cdot)} \label{B.10}
\end{align}

\noindent In Constraint (\ref{B.10}), the expression $\sum_{t_i \in T_i \setminus {30}} z_{1(20)(t_i)} \Pr(\max(t_i,20) + \tilde{d}_2 = 50)$ is equal to zero considering (\ref{B.5}). The expression $\sum_{t_i \in T_i} z_{1(40)(t_i)} \Pr(\max(t_i,40) + \tilde{d}_2 = 50)$ is equal to zero since $\max(t_i,40) + \tilde{d}_2 = 50$ is not possible. Therefore, we have:
\begin{align}
    0.5 \cdot 0.5 &= z_{2(50)(.)} \label{B.11}
\end{align}

\noindent This validates the fourth line of Table \ref{exampletable}. Further, considering Constraint (\ref{B.4}) for Patient 2 and $f=60$ we have:

\begin{align}
    & z_{1(20)(30)}\Pr(\max(30,20)+\tilde{d}_2=60) + \sum_{t_i\in T_{i}\backslash \{30\}}{z_{1(20)(t_i)}\Pr(\max(t_i,20)+\tilde{d}_2=60)} \nonumber\\
    &+ z_{1(40)(30)}\Pr(\max(30,40)+\tilde{d}_2=60) \nonumber\\
    &+ \sum_{t_i\in T_{i}\backslash \{30\}}{z_{1(40)(t_i)}\Pr(\max(t_i,40)+\tilde{d}_2=60)} = z_{2(60)(.)} \label{B.12}
\end{align}

Given (\ref{B.12}), expressions  $\sum_{t_i\in T_{i}\backslash \{30\}}{z_{1(20)(t_i)}\Pr(\max(t_i,20)+\tilde{d}_2=60)}$ and \\ $\sum_{t_i\in T_i\backslash \{30\}}{z_{1(40)(t_i)}\Pr(\max(t_i,40)+\tilde{d}_2=60)}$ are equal to zero. Thus, we have:
\begin{align}
    0.5 \cdot 0.5 + 0.5 \cdot 0.5 &= z_{2(60)(.)} \label{B.13}
\end{align}

\noindent which validates the fifth line of Table \ref{exampletable}. Considering Constraint (4) for Patient 2 and $f=70$ we have:
\begin{align}
    & \sum_{t_i\in T_{i}\backslash \{30\}}{z_{1(20)(t_i)}\Pr(\max(t_i,20)+\tilde{d}_2=70)} + z_{1(20)(30)}\Pr(\max(30,20)+\tilde{d}_2=70) \nonumber\\
    & + z_{1(40)(30)}\Pr(\max(30,40)+\tilde{d}_2=70) + \sum_{t_i\in T_{i}\backslash \{30\}}{z_{1(40)(t_i)}\Pr(\max(t_i,40)+\tilde{d}_2=70)} \nonumber\\
    & + \sum_{t_i\in T_i}{z_{1(f\neq 40)(t_i)}\Pr(\max(t_i,f\neq 40)+\tilde{d}_2=70)} = z_{2(50)(.)} \label{B.14}
\end{align}

\noindent 
In above equality, $\sum_{t_i \in T_{i\setminus {30}}} z_{1(20)(t_i)} \Pr(\max(t_i,20) + \tilde{d}_2 = 70)$ is equal to zero considering (\ref{B.5}). 

In addition, $z_{1(20)(30)} \Pr(\max(30,20) + \tilde{d}2 = 70)$ is equal to zero since $\max(30,20) + \tilde{d}2 = 70$ is not possible. Moreover, $z_{1(40)(30)} \Pr(\max(30,40) + \tilde{d}2 = 70)$ is equal to zero considering (\ref{B.5}). Finally, $\sum_{t_i \in T{i\setminus {30}}} z_{1(40)(t_i)} \Pr(\max(t_i,40) + \tilde{d}_2 = 70)$ is equal to zero since constraint (3) for Patient 2 shows $z_{1(20)(30)}+z_{1(40)(30)}+\sum_{t_i \in T_i} z_{1(f\neq20)(30)} =1$, which together with $0.5=z_{1(20)(30)}$ and $0.5=z_{1(20)(40)}$ proves $\sum_{t_i \in T_i} z_{1(f\neq20)(30)} =0$. Thus we have:

\begin{align}
    0.5 &= z_{2(50)(.)} \label{B.15}
\end{align}

which validates the last line of Table \ref{exampletable}.

\end{document}